\documentclass[11pt, a4paper]{article}%
\usepackage{amsmath}%
\setcounter{MaxMatrixCols}{30}%
\usepackage{amsfonts}%
\usepackage{amssymb}%
\usepackage{graphicx}
\usepackage{color}
\usepackage{xypic}
\usepackage{hyperref}
\hypersetup{colorlinks=red,citecolor=red,urlcolor=red,linkcolor=red}
%\hypersetup{colorlinks=blue}
%\definecolor{my}{rgb}{0.99,0.00,0.00}
%\usepackage[notcite,notref]{showkeys}

\setlength{\headheight}{0cm} \setlength{\headsep}{0cm}
\setlength{\oddsidemargin}{0.6cm}
\setlength{\evensidemargin}{1.5cm} \setlength{\textheight}{21.5cm}
\setlength{\textwidth}{15cm}
\newtheorem{theorem}{Theorem}

\newtheorem{definition}[theorem]{Definition}

\newtheorem{lemma}[theorem]{Lemma}

\newtheorem{proposition}[theorem]{Proposition}
\newtheorem{remark}[theorem]{Remark}

\newenvironment{proof}[1][Proof]{\noindent\textbf{#1.} }{\ \rule{0.5em}{0.5em}}

\newcommand{\NN}{\mathbb N}
\newcommand{\C}{\mathbb C}

\def\C{\mathbb{C}}
\def\N{\mathbb{N}}

\def\fJ{\mathfrak{J}}
\def\fP{\mathfrak{P}}

\def\square{\ \rule{0.5em}{0.5em}}

\begin{document}

\title{On the Index and the Order of Quasi-regular Implicit Systems of Differential Equations}

\author{Lisi D'Alfonso\thanks{Partially supported by  UBACyT X112 (2004-2007) and X211 (2008-2010).} \\[0.2cm]
{\textit{\normalsize Departamento de Matem\'atica, Facultad de Ciencias
Exactas y Naturales,}}\\
{\normalsize{and} \textit{\normalsize Departamento de Ciencias Exactas, Ciclo B\'asico Com\'un,}}\\
{\textit{\normalsize Universidad de Buenos Aires, Ciudad Universitaria, 1428
Buenos Aires, Argentina}}
\\[0.3cm]\and
Gabriela Jeronimo$^*$\thanks{Partially supported by CONICET PIP
5852/05, UBACyT X847 (2006-2009) and ANPCyT PICT 17-33018/05.},
Gustavo Massaccesi\thanks{Partially supported by UBACyT X058,
CONICET PIP 5056, ANPCyT PICT 03-15033 and Fundaci\'on Ciencias
Exactas y Naturales (Argentina).}, Pablo Solern\'o$^*$
\\[0.3cm]
{\textit{\normalsize Departamento de Matem\'atica, Facultad de Ciencias
Exactas y Naturales}}\\
{\textit{\normalsize Universidad de Buenos Aires, Ciudad Universitaria, 1428
Buenos Aires, Argentina}}\\{\normalsize and  CONICET,  \textit{Argentina}}
\\[0.2cm]
{\normalsize E-mails: lisi@dm.uba.ar, jeronimo@dm.uba.ar,
gustavo@oma.org.ar, psolerno@dm.uba.ar}}

%\date{}

\maketitle

\begin{abstract}
This paper is mainly devoted to the study of the differentiation
index and the order for quasi-regular implicit ordinary differential
algebraic equation (DAE) systems. We give an algebraic definition of
the differentiation index and prove a Jacobi-type upper bound for
the sum of the order and the differentiation index. Our techniques
also enable us to obtain an alternative proof of a combinatorial
bound proposed by Jacobi for the order.

As a consequence of our approach we deduce an upper bound for the
Hilbert-Kolchin regularity and an effective ideal membership test
for quasi-regular implicit systems. Finally, we prove a theorem of
existence and uniqueness of solutions for implicit differential
systems.
\end{abstract}

\bigskip

\noindent \emph{Key words:} Implicit systems of differential
equations, differentiation index,  order of a differential ideal,
existence and uniqueness, differential membership problem.
%\tableofcontents

\section{Introduction}

Throughout the paper we consider implicit differential algebraic
equation systems (DAE systems for short) of the following type:

\begin{equation} \label{sistema intro}
(\Sigma):=\left\{
\begin{array}
[c]{ccl}%
f_1(X_1,\dot{X_1},\ldots, X_1^{(\epsilon_{11})},\ldots, X_n, \dot X_n, \ldots, X_n^{(\epsilon_{1n})}) &=& 0 \\
&\vdots &\\
f_r(X_1,\dot{X_1},\ldots,X_1^{(\epsilon_{r1})},\ldots, X_n, \dot X_n, \ldots, X_n^{(\epsilon_{rn})}) &=& 0 \\
\end{array}
\right.
\end{equation}
where $r\le n$, and $f_1,\dots, f_r$ are polynomials in the $n$
differential unknowns $X:=X_1,\ldots, X_n$ and some of their
derivatives with coefficients in a characteristic zero differential
field $K$ (i.e. a  field with a derivation, for instance
$K:=\mathbb{Q}, \mathbb{R}$ or $\mathbb{C}$ with the null
derivation, or a field of rational functions in one variable with
the standard derivation). Each non-negative integer $\epsilon_
{ij}$, $1\le i\le r$, $ 1\le j\le n$,  denotes the maximal order of
derivation of the variable $X_j$ appearing in the polynomial $f_i$.

The set of all total formal derivatives of the equations defining
$(\Sigma)$ generates an ideal $[F]$, closed by differentiation, in
the (algebraically non-noetherian) polynomial ring $K\{X\}$ in the
infinitely many variables $X_j^{(l)}$, $j=1,\ldots,n$, $l\in
\mathbb{N}_0$. Since this ideal is not necessarily prime, we also
fix a \emph{minimal} prime ideal $\frak{P}\subset K\{X\}$ containing $[F]$.

We require a regularity assumption on the system $(\Sigma)$
with respect to the prime ideal $\fP$: we suppose that the system
itself, as well as the systems obtained by successive total
derivations of the defining equations, are non-singular complete
intersection algebraic varieties at the (not necessarily closed)
point $\frak{P}$. In terms of K\"ahler differentials this condition
is formalized saying that the differentials $\{
\textrm{d}f_i^{(k)},\ 1\le i\le r,\ k\in \mathbb{N}_0\} \subset
\Omega_{K\{X\}|K}$ are a $K\{X\}/\fP -$linearly independent set in
$\Omega_{K\{X\}|K}\otimes_{K\{X\}} K\{X\}/\fP $. A DAE system with this property is
called \emph{quasi-regular at $\fP$} (see, for instance,
\cite{kondra}, \cite{ollisad}). In addition, we make use of
a rather technical
hypothesis which holds for a wide class of DAE systems
(see Subsection \ref{seccionjacobiana} below)

The main invariant we consider in this paper is the
\emph{differentiation index of the system $(\Sigma)$}. There are
several definitions of differentiation indices not all completely
equivalent (see for instance \cite{brenan}, \cite{campbell-gear},
\cite{Fliess}, \cite{kunkel}, \cite{levey}, \cite{pantelides},
\cite{poulsen}, \cite{sit}, \cite{rabier}, \cite{reid},
\cite{seiler}), but in every case it represents a measure of the
implicitness of the given system. For instance, for first order
systems, differentiation indices provide bounds for the number of
total derivatives of the system needed in order to obtain an
equivalent ODE system (see \cite[Definition 2.2.2]{brenan}). Thus,
differentiation indices turn out to be closely related to the
complexity of the traditional numerical methods applied to solve
these systems (see \cite[Theorem 5.4.1]{brenan}). Since explicitness
is strongly related to the existence of classical solutions, a
differentiation index should also bound the number of derivatives
needed in order to obtain existence and uniqueness theorems (see
\cite{pritchard}, \cite{sit}, \cite{rabier}).

Following the construction given in our previous paper \cite{DJS07},
we introduce here a new differentiation index for a DAE system
$(\Sigma)$ quasi-regular at a minimal prime ideal $\fP$, that we
call \emph{the $\fP$-differentiation index}. As usual, its
definition follows from a certain chain which eventually becomes
stationary. In our case, this chain condition is simply established
by the sequence of ranks of certain Jacobian submatrices associated
to the input equations and their total derivatives (see Theorem
\ref{definicion de indice} below). This approach enables us to show
in an easy way several properties and consequences of our
differentiation index.

In particular, we show that the $\fP$-differentiation index is
closely related to the number of derivatives of the system needed to
obtain the manifold of all constraints that must be satisfied by its
solutions. More precisely, we prove that for every order $h$, all
the differential consequences of order $h$ of the quasi-regular
system $(\Sigma)$ can be obtained from the first $h+\sigma
-\max\{\epsilon_{ij}\}$ derivatives of the equations (see Theorem
\ref{sadik}).

The second invariant of the system $(\Sigma)$ we consider is the
\emph{order} of the prime differential ideal $\fP$ (which will also
be called the order of $(\Sigma)$ in the case where the differential
ideal generated by the equations is prime). Roughly speaking, the
\emph{order} of a DAE system is the number of initial conditions
that can be prefixed arbitrarily.

In his posthumous papers \cite{jacobi} and \cite{jacobi2}, Jacobi
introduced the parameter $J(\mathcal{E}):=\max\{\sum_{i=1}^r
\epsilon_{i\tau(i)} \mid \tau:\{1,\dots, r\} \to \{1,\dots, n\}
\text{ is an injection}\}$ as an optimal estimation for the
\emph{order} of an ordinary DAE system (here $\epsilon_{ij}:=-\infty$ if
the variable $X_j$ does not appear in $f_i$).
This result should be
considered as a conjecture, since Jacobi's proof seems
not to be complete in the general case. Nevertheless,
in the zero-dimensional case, it
has been proved for
linear systems by Ritt \cite[Ch.~VII, p.~135]{ritt}
and extended to quasi-regular DAE systems
in \cite{kondra} (see also \cite{ollisad}).
Other less accurate upper bounds for the order, mainly of B\'ezout
type, have been given for general (zero-dimensional) systems (see,
for instance, \cite{cohn}, \cite{DJS07}, \cite[Ch.~IV,
Prop.~9]{kol}, \cite{ritt}, \cite{sadikt}).

Here, we establish an \emph{a priori} upper bound for the sum of the
$\fP$-differentiation index and the order of $\fP$ in the
quasi-regular setting, in terms of the maximal derivation orders
$\epsilon_{ij}$ of the variables in the input equations. More
precisely, we prove the following inequality:
\[\sigma +{\rm ord}(\fP) \le J(\mathcal{E}_0) + \max\{\epsilon_{ij}\} -
\min\{\epsilon_{ij}\},\] where  $\sigma$ denotes the
$\fP$-differentiation index of the system $(\Sigma)$, ${\rm
ord}(\fP)$ the order of the differential ideal $\fP$ and $J(\mathcal{E}_0)$ the
\emph{weak} Jacobi number $J(\mathcal{E}_0):=\max\{\sum_{i=1}^r
\epsilon_{i\tau(i)} \mid \tau:\{1,\dots, r\} \to \{1,\dots, n\}
\text{ is an injection}\}$ with $\epsilon_{ij}:=0$ if
the variable $X_j$ does not appear in $f_i$. In particular,
we deduce an upper bound for the differentiation index in terms of
the weak Jacobi number  $J(\mathcal{E}_0)$ (see \cite{DJS07},
\cite{pritchard}, \cite{rabier} for less precise estimations).

Moreover, our techniques enable us to recover the
original Jacobi bound for the order
of quasi-regular DAE systems (see Theorem
\ref{cotaorden}). Our strategy may be considered as a generalization
of Lando's method introduced to obtain a weak Jacobi bound for the
order of linear differential equations (see \cite{lando}).

Our approach, which is reminiscent of the classical
\emph{completion} techniques in partial differential algebraic
equations (see \cite{cartan}, \cite{janet}, \cite{kuranishi} and
\cite{riquier}), also enables us to exhibit an upper bound for the
\emph{Hilbert-Kolchin regularity} of the ideal $\fP$ depending only
on the maximal order of derivation of the variables involved in the
system (see Theorem \ref{hilbert constante}). A precise estimation
of this regularity can be obtained if a characteristic set of
the ideal $\fP$ is known
(see \cite[Ch.II, Section 12, Th.6 (d)]{kol}) while in our case this
requirement is not necessary.

As a further consequence of the previous results, we deduce  an
effective differential ideal membership test for quasi-regular DAE
systems (Theorem \ref{member bound}): we obtain asymptotically
optimal bounds on the order and degrees for the representation of a
differential polynomial lying in the ideal in terms of the given
generators (see \cite{Seidenberg}, \cite{Boulier} and
\cite{golubitsky} for related works).

Finally, in the zero-dimensional case ($n=r$), we obtain a result
concerning the number of derivatives of the input equations required
to obtain an explicit ODE system (see Theorem \ref{despeje general
dimension 0}) and an existence and uniqueness theorem (see Theorem
\ref{exyun}).

The paper is organized as follows: in Section \ref{preliminaries}
the notion of a quasi-regular DAE polynomial system is introduced,
along with some basic notions from differential algebra we use. In
Section \ref{defindex} we give the precise definition of the
$\fP$-differentiation index and show the relationship of this
invariant with the manifold of constrains of the system. The
Hilbert-Kolchin regularity and the order of the ideal $\fP$ are
analyzed in Section \ref{seccion hilbert kolchin}. Section
\ref{seccion jacobi} is devoted to the proofs of Jacobi-type bounds
for the differentiation index and the order. We present our results
on the differential membership problem for quasi-regular systems in
Section \ref{membership}. The last section of the paper is concerned
with existence and uniqueness of solution results for implicit
quasi-regular DAE systems.

\section{Preliminaries}\label{preliminaries}

\subsection{Basic Notations}
Let $K$ be a characteristic zero field equipped with a derivation
$\delta$. For instance $K=\mathbb{Q}$, $\mathbb{R}$ or $\mathbb{C}$
with $\delta:=0$, or $K=\mathbb{Q}(t)$ with the usual derivation
$\delta(t)=1$, etc.

For an arbitrary set of (differential) indeterminates $Z_1,\ldots
,Z_\alpha$ over $K$ we denote the $p$-th successive derivative of a
variable $Z_j$ as $Z_j^{(p)}$ (as customarily, the first derivatives
are also denoted by $\dot{Z_j}$); we write
$Z^{(p)}:=\{Z^{(p)}_1,\ldots,Z^{(p)}_\alpha\}$ and
$Z^{[p]}:=\{Z^{(i)},\ 0\le i\le p\}$.

The (non-noetherian) polynomial ring $K[Z^{(p)},\ p\in
\mathbb{N}_0]$, called the ring of \emph{differential polynomials},
is denoted by $K\{Z_1,\ldots ,Z_\alpha\}$ (or simply $K\{Z\}$).

Given a finite set of (differential) polynomials $H_1,\dots, H_\beta
\in K\{ Z_1,\dots, Z_\alpha\}$, we write $[H_1,\dots, H_\beta]$ to
denote the smallest ideal of $K\{Z_1,\dots, Z_\alpha\}$ stable under
differentiation, i.e. the smallest ideal containing $H_1,\dots,
H_\beta$ and all their derivatives of arbitrary order. The ideal
$[H_1,\dots, H_\beta]$ is called the \emph{differential ideal}
generated by $H_1,\dots, H_\beta$.

For  any differential polynomial $g$ lying in a differential
polynomial ring $K\{ Z_1,\ldots ,Z_\alpha \}$ the following
recursive relations hold for the successive total derivatives of
$g$:
$$\begin{array}{rcl}
g^{(0)}&:=& g, \\
g^{(p)}&:=& \delta(g^{(p-1)})+\displaystyle{\sum_{i\in
\mathbb{N}_0,1\le j\le \alpha} \dfrac{\partial g^{(p-1)}}{\partial
Z_j^{(i)}}Z_j^{(i+1)}}, \quad \hbox{ for } p\ge 1, \qquad
\end{array}$$ where $\delta(g^{(p-1)})$ denotes the polynomial
obtained from $g^{(p-1)}$ by applying the derivative $\delta$ to all
its coefficients (for instance, if $K$ is a field of constants, this
term is always zero).

\subsection{The system} \label{sistema original}

Let $r\le n\in \mathbb{N}$. Throughout the paper we consider DAE
systems of the following type:
\begin{equation} \label{sistema ampliado}
(\Sigma):=\left\{
\begin{array}
[c]{ccl}%
f_1(X_1^{[\epsilon_{11}]},\ldots, X_n^{[\epsilon_{1n}]}) &=& 0 \\
&\vdots &\\
f_r(X_1^{[\epsilon_{r1}]},\ldots, X_n^{[\epsilon_{rn}]}) &=& 0 \\
\end{array}
\right.
\end{equation}
where $f_1,\dots, f_r$ are differential polynomials in the $n$
differential variables $X:=X_1,\ldots, X_n$ with coefficients in the
field $K$. Each non-negative integer $\epsilon_ {ij}$ denotes the
maximal derivation order of the variable $X_j$ appearing in the
polynomial $f_i$. We denote $e:=\max\{\epsilon_{ij}\}$ for the
maximal derivation order which occurs in $(\Sigma)$. We assume that
$(\Sigma)$ actually involves derivatives, i.e. $e\ge 1$.

We write $[F]\subset K\{ X\}$ for the \emph{differential} ideal
generated by the polynomials $f_1, \dots, f_r$.

We introduce also the following auxiliary (noetherian) polynomial
rings and ideals: for every $i\in \NN_0$, $A_i$ denotes the
polynomial ring $A_i:=K[X^{[i]}]$ and $\Delta_i\subset A_{i-1+e}$
the ideal generated by the total derivatives of the defining
equations up to order $i-1$, namely $\Delta_i:=(f_1^{[i-1]},\ldots
,f_r^{[i-1]})$ (this ideal is usually known as the \emph{$i-1$
prolongation ideal}). We set $\Delta_0:=(0)$ by definition.

\subsection{Quasi-regular DAE systems}

The notion of \emph{quasi-regularity} appears implicitly
in \cite{johnson} in order to generalize
a Janet Conjecture to non-linear systems. Up to our knowledge it is the more
general frame where the Jacobi order bound holds
(see for instance \cite{Cohn2}, \cite{kondra}, \cite{ollisad}, \cite{ollivier}).

\begin{definition} \label{quasir}
If $\fP  \subset K\{X\}$ is a prime differential ideal
containing $f_1,\ldots,f_r$ (or equivalently, containing $[F]$) we
say that the system $(\Sigma)$ is \emph{quasi-regular at $\fP $} if
for every positive integer $i$, the Jacobian matrix of the
polynomials $f_1^{[i-1]},\ldots ,f_r^{[i-1]}$ with respect to the
set of variables $X^{[i-1+e]}$ has full row rank over the domain
$A_{i-1+e}/(A_{i-1+e}\cap\, \fP )$.
\end{definition}

This condition can be easily rephrased in terms of K\"ahler
differentials (as in Johnson's original work \cite{johnson})
saying that the differentials $\{
\textrm{d}f_j^{(i)},\ 1\le j\le r,\ i\in \mathbb{N}_0\} \subset
\Omega_{K\{X\}/K}$ are a $K\{X\}/\fP -$linearly independent set in
$\Omega_{K\{X\}/K}\otimes_{K\{X\}} K\{X\}/\fP $.\\

Geometrically, Definition \ref{quasir} means that for any positive integer $i$
the algebraic variety $V_i$ given by the ideal generated by the $ri$ polynomials
$f_1^{[i-1]},\ldots ,f_r^{[i-1]}$ in the $(i-1+e)n$-variate polynomial ring $A_{i-1+e}$
\emph{is smooth in almost all point} of the closed subvariety defined by
the prime ideal $A_{i-1+e}\cap\, \fP$.

This notion can also be interpreted in the context of jet spaces:
for the sake of simplicity, assume that $\fP$ is the prime ideal associated with a
solution of the ideal $[F]$ having infinitely many coordinates.
The quasi-regularity condition means that, if
$\varphi(t)$ is a solution of the differential system such that
$\varphi^{(j)}(t_0)\in \mathbb{R}^n$
are the $j$-th coordinates of the point defined by $\fP$, for each integer $i$
the jet of $\varphi$ at $t_0$ is a \emph{regular point} of the subspace
$V\subset \textbf{J}^\infty(\mathbb{R},\mathbb{R}^n)$
defined by the input system differentiated up to order $i$.

In the sequel, we will assume that not only a system $(\Sigma)$ is
given, but also a prime differential ideal $\fP \subset K\{X\}$
\emph{minimal} with the property of containing the polynomials
$f_1,\ldots,f_r$ and such that $(\Sigma)$ is quasi-regular at $\fP
$.\\

For each non-negative integer $i$ we write $B_i$ for the local ring
obtained from $A_i$ after localization at the prime ideal $A_i\cap
\fP $. Since each $A_i$ is a polynomial ring, the localizations $B_i$
are regular rings (see for instance \cite[\S 1, Corollary 1.8]{kunz2}).
We denote $\mathfrak{N}_i$ the maximal ideal generated by
$A_i\cap \fP $ in $B_i$. For the sake of simplicity we preserve the
notation $\Delta_i$ for the ideal generated by the derivatives up to
order $i-1$ of the polynomials $f_1,\ldots,f_r$ in the ring
$B_{i-1+e}$.

\begin{remark} \label{primo}
If the ideal $[F]\subset K\{X\}$ is already a prime ideal, the
minimality of $\fP$ implies that $\fP=[F]$ and all our results
remain true considering the rings $A_i$ and the ideal $[F]$ without
localization. In this case if $(\Sigma)$ is quasi-regular at $[F]$
we will say simply that $(\Sigma)$ is quasi-regular.
\end{remark}

We mention now some easy consequences of the quasi-regularity
hypothesis related to localization at the prime $\fP $:

\begin{proposition} \label{regular sequence en P}
Let $(\Sigma)$ be a DAE system defined by polynomials
$F:=f_1,\ldots,f_r$ whose orders are bounded by $e$ and let $\fP
\subset K\{X\}$ be a minimal prime differential ideal containing $F$
such that $(\Sigma)$ is quasi-regular at $\fP $. Let $i\in
\mathbb{N}$ be an arbitrary positive integer. Then we have:
\begin{enumerate}
\item $f_1^{[i-1]},\ldots,f_r^{[i-1]}$ is a regular sequence in the
local ring $B_{i-1+e}$ and generates a prime ideal.

\item In the
localized ring $K\{X\}_\fP $, the polynomials $f_1, \ldots , f_r$
generate the maximal ideal $\fP  K\{X\}_\fP $.

\item If $\mathbb{K}$ denotes the residual field of the prime ideal
$\fP $, the differential transcendence degree of $\mathbb{K}$ over
$K$ is $n-r$.
\end{enumerate}
\end{proposition}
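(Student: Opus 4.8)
The plan is to treat the three items in sequence, with item (1) carrying the main weight and items (2) and (3) following as relatively formal consequences.

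For item (1), I would start from the quasi-regularity hypothesis, which says precisely that the Jacobian matrix of $f_1^{[i-1]},\ldots,f_r^{[i-1]}$ with respect to the variables $X^{[i-1+e]}$ has full row rank $ri$ modulo $A_{i-1+e}\cap\fP$. The key structural fact is that the local ring $B_{i-1+e}$ is regular (as noted in the excerpt, being a localization of a polynomial ring), so its maximal ideal $\mathfrak{N}_{i-1+e}$ is generated by a regular system of parameters and the associated graded ring is a polynomial ring. The Jacobian criterion then translates the full-rank condition into the statement that the images of $f_1^{[i-1]},\ldots,f_r^{[i-1]}$ in $\mathfrak{N}_{i-1+e}/\mathfrak{N}_{i-1+e}^2$ are $K$-linearly independent; equivalently, these elements form part of a regular system of parameters of $B_{i-1+e}$. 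From this, both conclusions of item (1) are standard: a subset of a regular system of parameters in a regular local ring is a regular sequence, and the quotient of a regular local ring by an ideal generated by part of a regular system of parameters is again a regular local ring, hence a domain — so the ideal $\Delta_i$ they generate in $B_{i-1+e}$ is prime. One subtlety to be careful about here is that the Jacobian matrix involves derivatives of the coefficients (the $\delta$-terms), but since we are only asserting full rank of a matrix with entries in a domain, this does not affect the argument; the full-rank hypothesis is exactly what Definition \ref{quasir} grants us.

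For item (2), I would specialize item (1) to a large enough $i$ — or rather, take the direct limit. The ring $K\{X\}_\fP$ is the localization of the non-noetherian ring $K\{X\}$ at $\fP$; one expresses it as the direct limit of the local rings $B_N = (A_N)_{A_N\cap\fP}$ as $N\to\infty$. Taking $i$ with $i-1+e = N$, item (1) says that $f_1^{[i-1]},\ldots,f_r^{[i-1]}$ is part of a regular system of parameters of $B_N$. The point is that the remaining members of a regular system of parameters of $B_N$ can be taken among the variables $X_j^{(l)}$ themselves (after reordering), so that in $K\{X\}_\fP$ we may solve, at each finite level, for the "dependent" variables in terms of the others using $f_1,\ldots,f_r$ and their derivatives. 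Passing to the limit, $\fP K\{X\}_\fP$ is generated by the variables that are not among a chosen differential transcendence basis, and each such variable is expressed — via the implicit function theorem in the algebraic/formal sense, i.e. using that the $f_i^{(k)}$ generate — modulo the ideal generated by $f_1,\ldots,f_r$. Hence $f_1,\ldots,f_r$ generate $\fP K\{X\}_\fP$.

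For item (3), the differential transcendence degree of $\mathbb{K}=\mathrm{Frac}(K\{X\}/\fP)$ over $K$ equals $n$ minus the "differential codimension", and the computation is a dimension count using item (1): at level $N=i-1+e$, the ring $A_N/(A_N\cap\fP)$ has Krull dimension equal to $\dim A_N$ minus the height of the prime, which by item (1) is $ri$ (the length of the regular sequence). So $\dim(A_N/(A_N\cap\fP)) = n(N+1) - ri = n(i-1+e+1) - ri = (n-r)i + ne$. Viewing the transcendence degree of the residual field of $A_N\cap\fP$ over $K$ as this dimension, the leading coefficient in $i$ of the Kolchin-type dimension polynomial is $n-r$, which is by definition the differential transcendence degree. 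The main obstacle, and the step deserving the most care, is item (1): getting cleanly from "the Jacobian has full row rank over the domain $A_{i-1+e}/(A_{i-1+e}\cap\fP)$" to "the $f_i^{[i-1]}$ are part of a regular system of parameters of the regular local ring $B_{i-1+e}$", which is exactly the Jacobian criterion for regularity applied inside a localization of a polynomial ring over a not-necessarily-algebraically-closed field of characteristic zero; everything else is bookkeeping built on that.
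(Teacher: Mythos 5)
Your item (1) is essentially the paper's own argument (quasi-regularity plus the Jacobian criterion in the regular local ring $B_{i-1+e}$ shows that $f_1^{[i-1]},\ldots,f_r^{[i-1]}$ are part of a regular system of parameters, hence a regular sequence generating a prime ideal); the only slip there is that the images in $\mathfrak{N}_{i-1+e}/\mathfrak{N}_{i-1+e}^2$ must be linearly independent over the residue field of $B_{i-1+e}$, not over $K$. The real problem is item (2): your argument never uses the minimality of $\fP$, and the statement is false without it (if $\fP$ strictly contained a minimal prime of $[F]$, then $f_1,\ldots,f_r$ could not generate $\fP K\{X\}_\fP$). Concretely, the step ``the remaining members of a regular system of parameters of $B_N$ can be taken among the variables $X_j^{(l)}$'' is not available -- those extra parameters must lie in $\mathfrak{N}_N=(A_N\cap\fP)B_N$, and the variables are in general not in $\fP$ -- and, more importantly, at every finite level $N$ the ideal generated by the $f_i$ and their derivatives has height $r(N-e+1)$ while $\mathfrak{N}_N$ has height $r(N+1)-{\rm ord}(\fP)$, so there is a persistent deficit of $re-{\rm ord}(\fP)$ parameters that no implicit-function ``solving'' at a fixed level can absorb. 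The paper closes item (2) by a different mechanism: by item (1) the ideals $\Delta_k K\{X\}_\fP$ form an increasing chain of primes, so their union $[F]K\{X\}_\fP$ is prime; its contraction to $K\{X\}$ is then a prime differential ideal squeezed between $[F]$ and $\fP$, hence equal to $\fP$ by minimality. That minimality step is what your proposal is missing.

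Item (3) has a related gap. You take the height of $A_N\cap\fP$ to be $ri$ ``by item (1)'', but item (1) only computes the height of the prolongation ideal $\Delta_i$ inside $B_{i-1+e}$; the prime $A_{i-1+e}\cap\fP$ contains $\Delta_i$ and is in general strictly larger (its height is $r(i+e)-{\rm ord}(\fP)$), so your dimension formula is off by the constant $re-{\rm ord}(\fP)$. The regular sequence only bounds the height of $A_N\cap\fP$ from below, which yields the inequality ``differential transcendence degree $\le n-r$''; the opposite inequality is the nontrivial half and cannot be read off from item (1) alone -- it is precisely where minimality re-enters. The paper obtains it by first noting that $f_1,\ldots,f_r$ are differentially algebraically independent over $K$, completing them to a differential transcendence basis $f_1,\ldots,f_r,\theta_1,\ldots,\theta_{n-r}$ of ${\rm Frac}(K\{X\})$ over $K$, and then using item (2) to show that the classes of $\theta_1,\ldots,\theta_{n-r}$ remain differentially algebraically independent modulo $\fP$. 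Your dimension-count strategy could in principle be made to work, but only after you have established an upper bound on the height of $A_N\cap\fP$ of the form $rN+O(1)$, and that is equivalent to the very statement being proved.
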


\begin{proof}
The first assertion is a direct consequence of the classical
Jacobian Criterion (see for instance \cite[\S 29]{matsumura})
applied to the regular local ring $B_{i-1+e}$ and the polynomials
$f_1^{[i-1]},\ldots,f_r^{[i-1]}$ which belong to the maximal ideal
$\mathfrak{N}_{i-1+e}$. So, the quasi-regularity implies that these
polynomials form part of a system of parameters in $B_{i-1+e}$ and
in particular they generate a smooth (hence prime) ideal and form a
regular sequence.

Since only a finite number of variables are involved in primality
checking in $K\{X\}_\fP $, the previous arguments imply that the
differential ideal $[F]\, K\{X\}_\fP$ is necessarily prime in the
local ring $K\{X\}_\fP $ and then, by minimality, it agrees with
$\fP \, K\{X\}_\fP $.

For the last assertion we observe first that the polynomials
$f_1,\ldots, f_r$ are in fact differentially algebraically
independent over $K$: any finite family of them and their
derivatives is part of a system of parameters of a suitable local
$K$-algebra of type $B_i$ for all $i$ big enough, then
algebraically independent over $K$. Now, let $\theta_1,\ldots
,\theta_{n-r}\in K\{X\}$ be such that $f_1,\ldots,
f_r,\theta_1,\ldots ,\theta_{n-r}\in K\{X\}$ is a differential
transcendence basis of the fraction field of $K\{X\}$ over $K$.
Taking the classes of $\theta_1,\ldots ,\theta_{n-r}$ in the
fraction field of $(K\{X\}/[F])_\fP $ one obtains a differentially
algebraically independent family. Hence, the third assertion
follows from the second condition of the Proposition.
\end{proof}

\bigskip

{}From now on we will write $\Delta$ for the ideal $[F]\,
K\{X\}_\fP=\fP\, K\{X\}_\fP$.

\subsection{Pseudo-Jacobian matrices} \label{seccionjacobiana}

We introduce a family of pseudo-Jacobian matrices we need in order
to introduce the notion of differentiation index
in the next section:

\begin{definition} \label{defimatricespi}
For each $k\in\N$ and $i\in \mathbb{N}_{\ge e-1}$ (i.e. $i\in
\mathbb{Z}$ and $i\ge e-1$), we define the $kr\times kn$-matrix
$\mathfrak{J}_{k,i}$ as follows:
\[ \mathfrak{J}_{k,i}:= \left(
\begin{array}{cccc}
\frac{\partial F^{(i-e+1)}}{\partial {X}^{(i+1)}} & 0  & \cdots & 0
\\[1.5mm]
\frac{\partial {F}^{(i-e+2)}}{\partial {X}^{(i+1)}} & \frac{\partial
{F}^{(i-e+2)}}{\partial {X}^{(i+2)}} &
\cdots  & 0 \\[1.5mm]
\vdots & \vdots   & \ddots  &  \vdots \\[1.5mm]
\frac{\partial F^{(i-e+k)}}{\partial {X}^{(i+1)}}  & \frac{
\partial F^{(i-e+k)}}{\partial X^{(i+2)}}  &
\cdots & \frac{\partial F^{(i-e+k)}}{\partial X^{(i+k)}}
\end{array}
\right),
\]
where each $\frac{\partial F^{(p)}}{\partial {X}^{(q)}}$ denotes the
$r\times n-$block consisting in the Jacobian matrix of the
polynomials $f_1^{(p)},\ldots,f_r^{(p)}$ with respect to the
variables $X_1^{(q)},\ldots ,X_n^{(q)}$.
\end{definition}

Observe that the block triangular form of $\mathfrak{J}_{k,i}$
follows from the fact that the differential polynomials $F^{(i-e+p)}$
have order bounded by $i+p$. Hence, their derivatives with respect to
the variables $X^{(i+j)}$  are identically zero for $j\ge p+1$.

Definition \ref{defimatricespi} is quite ambiguous because this matrix may
be considered over different rings, and so, invariants as the rank
or the solution space are not well defined and in fact they may differ.

We add the following last hypothesis on our input system $(\Sigma)$:\\

\noindent \textbf{Hypothesis\ } We assume that for any pair of indices $i,k$ the rank of the
matrix $\mathfrak{J}_{k,i}$ over the integral domain $B_{i+k+s}/\Delta_{i-1+e+k+s}$ does not
depend on $s$. In other words, the rank of each matrix
$\mathfrak{J}_{k,i}$ considered alternatively over the rings $B_{i+k}/\Delta_{i-1+e+k}$,
or $A_{i+k}/(A_{i+k}\cap \fP )$, or the residual field of $B_{i+k}$, or the field
$\mathbb{K}:=K\{X\}/\fP$, is always the same.\\

This hypothesis is satisfied for relevant classes of DAE's: for instance, linear or linear
time varying DAE's, or DAE systems with generic second member coming from Control Theory
(see \cite{DJS07}). This kind of assumptions can be regarded as an algebraic counterpart
of usual hypotheses concerning solvability, smoothness and constant rank properties
for the system and its prolongations, required
in various notions of differentiation indices in the analytic-numerical framework
(see for instance \cite{campbell-gear} or \cite{brenan}).

\section{A notion of $\fP -$differentiation index}\label{defindex}

\subsection{A linear algebra--based definition}

In this section we introduce a notion of a \emph{differentiation
index} of the system $(\Sigma)$ depending on the chosen minimal
prime differential ideal $\fP $ containing it, always assuming that
the system is quasi-regular at this prime ideal and verifies the hypothesis
introduced in Subsection \ref{seccionjacobiana}.\\

We introduce a double sequence  $\mu_{k,i}$ of non-negative integers
associated with the matrices $\fJ_{k,i}$:

\begin{definition}
\label{emes} For $k\in \N_0$ and $i\in \mathbb{N}_{\ge e-1}$, we
define $\mu_{k, i}\in \N_0$ as follows:
\begin{itemize}
\item[--] $\mu_{0,i} := 0 $.
 \item[--]
$\mu_{k,i}:=\dim _{\mathbb{K}}\ker (\mathfrak{J}_{k,i}^t)$, for
$k\ge 1$, where $\fJ_{k,i}^t$ denotes the usual transpose of the
matrix $\fJ_{k,i}$. In particular $\mu_{k,i}=kr-{\rm
rank}_{\mathbb{K}} (\mathfrak{J}_{k,i})$.
\end{itemize}
\end{definition}

The sequence $\mu_{k,i}$ is strongly related with some algebraic
facts concerning the algebraic dimension of the ideals $\Delta_p$
generated by the first $(p-1)-$th total derivatives of the
polynomials $f_1,\ldots,f_r$:

\begin{proposition} \label{equival}
Let $k\in \N_0$ and $i\in \mathbb{N}_{\ge e-1}$. Then:
\begin{enumerate}
\item[(i)] The transcendence degree of the field extension
\[{\rm Frac}(B_i/(\Delta_{i-e+1+k}\cap B_i)) \hookrightarrow {\rm
Frac}(B_{i+k}/\Delta_{i-e+1+k})\] is $k(n-r)+\mu_{k,i}$.
\item[(ii)] The following identity holds:
$$ {\rm trdeg}_K {\rm Frac}(B_i/(\Delta_{i-e+1+k}\cap B_i))  =
(n-r)(i+1) + er - \mu_{k,i}.$$
\end{enumerate}
\end{proposition}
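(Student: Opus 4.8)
The plan is to reduce both identities to a single computation of the height of a prime ideal in a polynomial ring over a field, and then to identify that height with the rank of $\fJ_{k,i}$ via the Jacobian criterion, invoking the hypothesis of Subsection \ref{seccionjacobiana} only at the last step. Assume $k\ge 1$ (the case $k=0$ is immediate: $\mu_{0,i}=0$, $\Delta_{i-e+1}\subseteq B_i$, and both identities then follow from Proposition \ref{regular sequence en P}). By Proposition \ref{regular sequence en P} the polynomials $f_1^{[i-e+k]},\dots,f_r^{[i-e+k]}$ form a regular sequence generating a prime ideal in $B_{i+k}$, so there is a prime $\mathfrak{q}\subset A_{i+k}$, contained in $A_{i+k}\cap\fP$, with $\mathfrak{q}\,B_{i+k}=\Delta_{i-e+1+k}$ and ${\rm height}(\mathfrak{q})=r(i-e+1+k)$. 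Setting $\mathfrak{q}':=\mathfrak{q}\cap A_i$ one checks that $\Delta_{i-e+1+k}\cap B_i=\mathfrak{q}'B_i$, hence
\[
M:={\rm Frac}(B_{i+k}/\Delta_{i-e+1+k})={\rm Frac}(A_{i+k}/\mathfrak{q}),\qquad
L:={\rm Frac}(B_i/(\Delta_{i-e+1+k}\cap B_i))={\rm Frac}(A_i/\mathfrak{q}')\subseteq M .
\]
Writing $A_{i+k}=A_i[Y]$ with $Y$ the set of $kn$ variables $X_s^{(i+q)}$ ($1\le s\le n$, $1\le q\le k$) and extending scalars along $A_i/\mathfrak{q}'\hookrightarrow L$, the ideal $\mathfrak{q}$ extends (because $\mathfrak{q}\cap A_i=\mathfrak{q}'$) to a prime $\mathfrak{q}L[Y]$ of the polynomial ring $L[Y]$ with ${\rm Frac}(L[Y]/\mathfrak{q}L[Y])=M$, so that ${\rm trdeg}_L M=\dim(L[Y]/\mathfrak{q}L[Y])=kn-{\rm height}(\mathfrak{q}L[Y])$. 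Thus (i) becomes equivalent to the equality ${\rm height}(\mathfrak{q}L[Y])=kr-\mu_{k,i}={\rm rank}_{\mathbb{K}}\fJ_{k,i}$.

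To compute this height I would work in the local ring $R:=(L[Y])_{\mathfrak{q}L[Y]}$, whose residue field is $M$ and whose dimension equals ${\rm height}(\mathfrak{q}L[Y])$; a localization argument identifies $R$ with $(A_{i+k})_{\mathfrak{q}}/\mathfrak{q}'(A_{i+k})_{\mathfrak{q}}$. Now $(A_{i+k})_{\mathfrak{q}}$ is regular local of dimension $r(i-e+1+k)$ and, by Proposition \ref{regular sequence en P}, its maximal ideal is generated by the regular system of parameters $\{f_j^{(p)}:1\le j\le r,\ 0\le p\le i-e+k\}$; the members with $p\le i-e$ have order $\le i$, hence lie in $A_i\cap\mathfrak{q}=\mathfrak{q}'$ and vanish in $R$, so the maximal ideal of $R$ is generated by the remaining $kr$ polynomials $G:=\{f_j^{(p)}:1\le j\le r,\ i-e+1\le p\le i-e+k\}$. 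In particular $\mathfrak{q}L[Y]$ is a minimal prime of the ideal generated by $G$ in $L[Y]$, so the Jacobian criterion (see \cite[\S 29]{matsumura}, applied over the characteristic zero field $L$) gives ${\rm height}(\mathfrak{q}L[Y])=\dim R={\rm rank}_M\big(\partial g/\partial Y\bmod\mathfrak{q}L[Y]\,:\,g\in G\big)$. By Definition \ref{defimatricespi} the Jacobian matrix of the family $G$ with respect to the variables $Y$ is exactly $\fJ_{k,i}$ (the block triangular shape reflecting the order bounds on the $f_j^{(p)}$ recalled after that definition), whence ${\rm height}(\mathfrak{q}L[Y])={\rm rank}_M\fJ_{k,i}$; finally the hypothesis of Subsection \ref{seccionjacobiana} (together with the fact that the rank of a matrix over a domain equals its rank over the fraction field) gives ${\rm rank}_M\fJ_{k,i}={\rm rank}_{\mathbb{K}}\fJ_{k,i}=kr-\mu_{k,i}$, which proves (i).

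Assertion (ii) then follows from the additivity of transcendence degree in the tower $K\subseteq L\subseteq M$: since $M={\rm Frac}(A_{i+k}/\mathfrak{q})$, with $A_{i+k}$ a polynomial ring in $n(i+k+1)$ variables over $K$ and ${\rm height}(\mathfrak{q})=r(i-e+1+k)$, we have ${\rm trdeg}_K M=n(i+k+1)-r(i-e+1+k)=(n-r)(i+k+1)+er$, and subtracting the value $k(n-r)+\mu_{k,i}$ of ${\rm trdeg}_L M$ obtained in (i) yields ${\rm trdeg}_K L=(n-r)(i+1)+er-\mu_{k,i}$. I expect the main obstacle to be the verification that the maximal ideal of $R$ is generated by exactly the $kr$ elements of $G$: this combines the system of parameters statement of Proposition \ref{regular sequence en P} with the observation that the lower order derivatives $f_j^{(p)}$ ($p\le i-e$) are absorbed into $\mathfrak{q}'$; once this is in place the Jacobian criterion does the rest, and the hypothesis of Subsection \ref{seccionjacobiana} is precisely what transports the rank of $\fJ_{k,i}$ from $M$ to the field $\mathbb{K}$ entering the definition of $\mu_{k,i}$.
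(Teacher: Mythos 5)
Your argument is correct and follows essentially the same route as the paper: part (i) is obtained by presenting ${\rm Frac}(B_{i+k}/\Delta_{i-e+1+k})$ as the fraction field of $L[Y]$ modulo the $kr$ polynomials $f_j^{(p)}$, $i-e+1\le p\le i-e+k$, and applying the Jacobian criterion to identify the relevant height with ${\rm rank}\,\fJ_{k,i}$, while part (ii) comes from the height $r(i-e+1+k)$ of $\mathfrak{q}$ together with additivity of transcendence degree. Your write-up is somewhat more explicit than the paper's (in particular in checking that the lower-order derivatives are absorbed into $\mathfrak{q}'$ and in invoking the hypothesis of Subsection \ref{seccionjacobiana} to move the rank from $M$ to $\mathbb{K}$), but the underlying argument is the same.
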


\begin{proof}The proof follows \cite[Proposition 2 and Remark 3]{DJS07}.

Consider the following diagram of field extensions:
\[\qquad
 \xymatrix{
{\rm Frac}(B_i/(\Delta_{i-e+1+k}\cap B_i)) \ar[r] & {\rm
Frac}(B_{i+k}/\Delta_{i-e+1+k})\\
K \ar[u]\ar[ru]& } \]

We can consider the fraction field of the
ring $B_{i+k}/\Delta_{i-e+1+k}$ as the fraction field of
$\Omega[X^{(i+1)},\ldots ,X^{(i+k)}]/(F^{(i)},\ldots ,F^{(i-e+k)})$,
where $\Omega$ is the fraction field of the integral domain  $B_i/(\Delta_{i-e+1+k}\cap B_i)$.
By the classical Jacobian Criterion (see for instance
\cite[Chapter 10, \S 27]{matsumura}), this transcendence degree equals
$kn - \textrm{rank}\ \mathfrak{J}_{k,i}=kn-(kr-\mu_{k,i})=k(n-r)+\mu_{k,i}$.

In order to prove the second assertion it suffices to compute
the transcendence degree of the fraction field of $B_{i+k}/\Delta_{i-e+1+k}$
over the ground field $K$.

Set $\mathfrak{p}:=A_{i+k}\cap \fP$ and let $\mathfrak{q}\subset \mathfrak{p}\subset A_{i+k}$ be the prime ideal
such that $\Delta_{i-e+1+k}B_{i+k}=\mathfrak{q}B_{i+k}$
(from the first assertion of Proposition \ref{regular sequence en P}
this prime ideal $\mathfrak{q}$ exists and it is unique
verifying these requirements).
Therefore, the transcendence degree of the
field extension $K\hookrightarrow {\rm
Frac}(B_{i+k}/\Delta_{i-e+1+k})$ is equal to the Krull dimension of
the domain $A_{i+k}/\mathfrak{q}$ and so, by the catenarity of the polynomial ring $A_{i+k}$,
equals $\dim A_{i+k}-\dim (A_{i+k})_{\mathfrak{q}}$. Again by the catenarity and the fact that
$\mathfrak{q}\subset \mathfrak{p}$ we conclude
$\dim (A_{i+k})_{\mathfrak{q}}=\dim (A_{i+k})_\mathfrak{p}- \dim (A_{i+k}/\mathfrak{q})_\mathfrak{p}$.
Since $ f_1^{[i-e+k]}, \ldots , f_r^{[i-e+k]}$ form a regular
sequence in $B_{i+k}$ (Proposition \ref{regular sequence en P})
we infer that $\dim (A_{i+k})_\mathfrak{p}- \dim (A_{i+k}/\mathfrak{q})_\mathfrak{p}$
agrees with the number
of such polynomials, i.e. with $r(i-e+1+k)$. Hence we obtain:
\[
\text{trdeg}_K {\rm
Frac}(B_{i+k}/\Delta_{i-e+1+k})= \dim A_{i+k} - r(i-e+1+k)=(i+k+1)(n-r)+er.
\]

The results follows directly from the additiveness of the transcendence degree. \end{proof}
\bigskip

By means of linear algebra arguments one can show as in
\cite[Section 3]{DJS07} some other relevant properties of the
sequence $\mu_{k,i}$; in particular we have the following results
which allow us to introduce the notion of \emph{differentiation
index} of $(\Sigma)$ related the quasi-regular prime differential
ideal $\fP$:

\begin{proposition} \label{mu no depende de i} \emph{{(\cite[Proposition 11]{DJS07})}}
The sequence $\mu_{k,i}$ does not depend on the index $i$.$\square$
\end{proposition}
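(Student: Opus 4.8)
The plan is to show that $\mu_{k,i}$ is independent of $i$ by comparing the matrix $\mathfrak{J}_{k,i}$ with $\mathfrak{J}_{k,i+1}$ and exhibiting a relation between their cokernels. The key observation is that the blocks $\frac{\partial F^{(i-e+p)}}{\partial X^{(i+q)}}$ appearing in $\mathfrak{J}_{k,i}$ can be related to the blocks of $\mathfrak{J}_{k,i+1}$ through the recursion for total derivatives: differentiating $f^{(p)}$ one more time produces $f^{(p+1)}$, and the chain rule gives $\frac{\partial F^{(p+1)}}{\partial X^{(q+1)}} = \frac{\partial F^{(p)}}{\partial X^{(q)}} + (\text{correction terms in lower-order variables})$. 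More precisely, the top-order (leading) part of the Jacobian is shift-invariant: $\frac{\partial F^{(i-e+p)}}{\partial X^{(i+p)}}$ equals (the evaluation of) $\frac{\partial F^{(i-e+p+1)}}{\partial X^{(i+p+1)}}$ modulo lower-order data, because the highest-order variable in $F^{(p)}$ enters linearly with coefficient equal to the leading Jacobian of $F$ itself. This is exactly the mechanism exploited in \cite[Section 3]{DJS07}, and I would follow that argument.

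Concretely, first I would make precise the claim that $\mathfrak{J}_{k,i}$ and $\mathfrak{J}_{k,i+1}$, viewed as matrices over $\mathbb{K}$, are related by left- and right-multiplication by invertible matrices (or, more carefully, that there is a chain of elementary row and column operations over $\mathbb{K}$ transforming one into the other up to the block structure), so that their ranks over $\mathbb{K}$ coincide. The role of the Hypothesis of Subsection \ref{seccionjacobiana} is crucial here: it guarantees that the rank of $\mathfrak{J}_{k,i}$ is the same whether computed over $\mathbb{K}$, over the residue field of $B_{i+k}$, or over the domain $B_{i+k}/\Delta_{i-1+e+k}$, so that the passage between the two indices (which a priori involves different localizations $B_{i+k}$ and $B_{i+1+k}$) does not change the numerical rank. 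Since $\mu_{k,i} = kr - {\rm rank}_{\mathbb{K}}(\mathfrak{J}_{k,i})$ and the factor $kr$ does not involve $i$, the independence of the rank on $i$ gives the independence of $\mu_{k,i}$ on $i$.

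A clean alternative, which I would actually prefer to present, is to bypass the direct matrix manipulation and instead invoke Proposition \ref{equival}(ii): that identity reads
\[
{\rm trdeg}_K {\rm Frac}(B_i/(\Delta_{i-e+1+k}\cap B_i)) = (n-r)(i+1) + er - \mu_{k,i}.
\]
So it suffices to show that the transcendence degree on the left-hand side equals $(n-r)(i+1) + er - c_k$ for a constant $c_k$ depending only on $k$ (and on the fixed data of the system). Equivalently, writing $d_i := {\rm trdeg}_K {\rm Frac}(B_i/(\Delta_{i-e+1+k}\cap B_i))$, one must show $d_{i+1} - d_i = n - r$. This difference measures how many of the new variables $X^{(i+1)}$ remain differentially free after imposing the prolongation ideal $\Delta_{i-e+1+k}$; since passing from level $i$ to level $i+1$ adds $n$ new variables $X^{(i+1)}$ and $r$ new equations $F^{(i-e+1+k)}$ has already been accounted for in a consistent way, and the quasi-regularity (Proposition \ref{regular sequence en P}) ensures these $r$ equations are independent modulo the previous ones, the net gain is exactly $n - r$. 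Making this last counting rigorous — carefully tracking which prolongation ideal is intersected with which ring $B_i$, and ensuring the regular-sequence property transfers correctly across the shift — is the main obstacle; this is precisely the point where one must lean on the quasi-regularity hypothesis and on the Hypothesis of Subsection \ref{seccionjacobiana}, and where the detailed bookkeeping of \cite[Section 3]{DJS07} is needed.
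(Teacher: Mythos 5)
First, a point of comparison: the paper does not prove this proposition at all --- it is imported verbatim from \cite[Proposition 11]{DJS07}, and the terminal square is a citation, not an argument. So your proposal can only be measured against the standard of a complete proof, and by that standard both of the routes you sketch stall at the same essential step. In the matrix route, the claim that $\fJ_{k,i}$ and $\fJ_{k,i+1}$ are connected by elementary row and column operations over $\mathbb{K}$ is not what the chain rule gives and is not justified. The correct commutation formula is $\frac{\partial \dot g}{\partial X_j^{(q)}} = \frac{\partial g}{\partial X_j^{(q-1)}} + \big(\frac{\partial g}{\partial X_j^{(q)}}\big)^{\textstyle\cdot}$, where the dot denotes total derivative. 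Iterating it shows that all diagonal blocks of both matrices equal $\partial F/\partial X^{(e)}$, but that each block of $\fJ_{k,i+1}$ equals the corresponding block of $\fJ_{k,i}$ \emph{plus the total derivative of the block one column to its right}. Adding total derivatives of other entries is not a row or column operation over $\mathbb{K}$: already for $k=2$ the two matrices are $\left(\begin{smallmatrix} J & 0\\ A & J\end{smallmatrix}\right)$ and $\left(\begin{smallmatrix} J & 0\\ A+\dot J & J\end{smallmatrix}\right)$ with $J=\partial F/\partial X^{(e)}$, and one must still explain why the perturbation $\dot J$ cannot change the rank (note $J$ need not have full rank, so this is not automatic). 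That is the whole content of the proposition, and it is left unproved.

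Your second route begins with a correct and genuinely useful reduction: by Proposition \ref{equival}(ii) it suffices to show $d_{i+1}-d_i=n-r$ for $d_i:={\rm trdeg}_K {\rm Frac}(B_i/(\Delta_{i-e+1+k}\cap B_i))$. But the argument you then give for that identity is circular. The same Proposition \ref{equival}(ii) yields exactly $d_{i+1}-d_i=(n-r)+\mu_{k,i}-\mu_{k,i+1}$, so the identity is \emph{equivalent} to the statement being proved, and your counting (``$n$ new variables, $r$ new independent equations, net gain $n-r$'') only controls the transcendence degree of the top quotient ${\rm Frac}(B_{i+1+k}/\Delta_{i-e+2+k})$ --- which is already the content of the proof of Proposition \ref{equival} --- and says nothing about how that degree splits between the contraction to $B_{i+1}$ and the relative extension. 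The new equation $F^{(i-e+1+k)}$ involves variables of order up to $i+1+k$, far above $i+1$, so whether it imposes new constraints at level $i+1$ as opposed to level $i$ is precisely what $\mu_{k,\cdot}$ measures; independence of the equations in the big ring (the regular-sequence property) does not decide this. You candidly flag this bookkeeping as ``the main obstacle'' and defer it to \cite{DJS07} --- but that obstacle \emph{is} the proposition, so what you have is an outline of two plausible strategies rather than a proof of either.
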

In the sequel we write simply $\mu_k$ instead of $\mu_{k,i}$.

\begin{theorem}\label{definicion de indice}
\emph{(\cite[Theorem 9 and Corollary 14]{DJS07})} There exists
$\sigma\in \mathbb{N}_0$ such that $\mu_k <\mu_{k+1}$ for all
$k<\sigma$ and $\mu_k=\mu_{k+1}$ for all $k\ge \sigma$.$\square$
\end{theorem}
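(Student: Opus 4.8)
The plan is to analyze the sequence $\mu_k:=\mu_{k,i}$ (independent of $i$ by Proposition \ref{mu no depende de i}) and show it is nondecreasing, strictly increasing up to some threshold, and then constant. First I would observe that each matrix $\fJ_{k,i}$ appears as a submatrix of $\fJ_{k+1,i}$ sitting in the top-left block corner, since passing from $k$ to $k+1$ adjoins one new block-row (the Jacobian of $F^{(i-e+k+1)}$) and one new block-column (derivatives with respect to $X^{(i+k+1)}$), while the old entries are unchanged. From this inclusion one gets $\operatorname{rank}\fJ_{k+1,i}\ge \operatorname{rank}\fJ_{k,i}+?$; more useful is to track the dimension of the cokernel. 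The key point is that adding one block-row of $r$ polynomial rows and one block-column can increase the rank by at most $r$ and by at least the contribution of the newly added block-column; a direct count gives $\mu_{k+1}-\mu_k = r - (\text{rank gained from the new column beyond the new row})$, and in particular $\mu_{k+1}\ge\mu_k$, so the sequence is nondecreasing. Alternatively — and this is cleaner — I would invoke Proposition \ref{equival}(i): $\mu_k$ equals the transcendence degree of $\operatorname{Frac}(B_{i+k}/\Delta_{i-e+1+k})$ over $\operatorname{Frac}(B_i/(\Delta_{i-e+1+k}\cap B_i))$ minus $k(n-r)$, and comparing the towers for $k$ and $k+1$ expresses $\mu_{k+1}-\mu_k$ as a transcendence degree of a field extension minus $(n-r)$, which is automatically $\ge 0$ once one checks that the relevant extension has transcendence degree at least $n-r$ (it is obtained by adjoining the $n$ new variables $X^{(i+k+1)}$ subject to $r$ new equations that form a regular sequence by Proposition \ref{regular sequence en P}).

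Next I would prove the crucial \emph{stabilization} property: once $\mu_{k+1}=\mu_k$ for one value of $k$, equality persists for all larger indices, i.e. the "first difference" $\mu_{k+1}-\mu_k$ is itself nonincreasing in $k$. This is the concavity of the sequence $(\mu_k)_k$. The argument I would use is the one from \cite[Section 3]{DJS07}: interpret $\mu_{k+1}-\mu_k$ as $r$ minus the rank of a certain "marginal" linear map obtained from $\fJ_{k+1,i}$ by quotienting out the span of the columns coming from the first $k$ blocks, and show that this rank can only grow (or the defect can only shrink) as $k$ increases, because $\fJ_{k,i}$ for different $k$ are nested and because Proposition \ref{mu no depende de i} lets one shift the index $i$ freely. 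Concretely: $\fJ_{k+1,i}$ restricted to its last block-row and expressed modulo the image of the earlier blocks is essentially $\frac{\partial F^{(i-e+k+1)}}{\partial X^{(i+k+1)}}$ modulo a subspace that only enlarges with $k$; hence $\mu_{k+1}-\mu_k$ is antitone, which combined with nonnegativity forces the sequence of differences to be a string of positive integers followed by zeros.

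Having established that $d_k:=\mu_{k+1}-\mu_k$ is nonnegative and nonincreasing in $k$, the theorem follows immediately: let $\sigma:=\min\{k\ge 0: d_k=0\}$, which exists because the $\mu_k$ are bounded — indeed $\mu_k = kr-\operatorname{rank}_{\K}\fJ_{k,i}\le kr$ is a priori unbounded, so I should instead bound $d_k\le r$ and note that if $d_k$ never vanished it would stay $\ge 1$ forever, contradicting... nothing by itself; so the boundedness must come from Proposition \ref{equival}(ii), which gives $\mu_k = (n-r)(i+1)+er - \operatorname{trdeg}_K\operatorname{Frac}(B_i/(\Delta_{i-e+1+k}\cap B_i))$, and the transcendence degrees on the right are nondecreasing in $k$ (larger $k$ means a larger ideal $\Delta_{i-e+1+k}\cap B_i$, hence smaller trdeg) and bounded below by $0$, so $\mu_k$ is bounded above; hence $\mu_k$ is eventually constant and antitonicity of $d_k$ makes it constant exactly from the first place it stabilizes. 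Then for $k<\sigma$ we have $d_k\ge 1$, i.e. $\mu_k<\mu_{k+1}$, and for $k\ge\sigma$ we have $d_k=0$, i.e. $\mu_k=\mu_{k+1}$, as claimed.

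The main obstacle I anticipate is the antitonicity (concavity) step: proving that the sequence of differences $d_k$ is nonincreasing requires a careful linear-algebra comparison of the nested pseudo-Jacobian matrices $\fJ_{k,i}$, keeping track of which block-columns are being quotiented out and using the $i$-independence from Proposition \ref{mu no depende de i} to realign indices. This is exactly the content of \cite[Theorem 9]{DJS07}, so in the present paper it is legitimate to cite it; but if a self-contained argument were wanted, that is where the real work lies — everything else (monotonicity, boundedness via Proposition \ref{equival}, and the final packaging) is routine.
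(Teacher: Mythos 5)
The paper gives no proof of this statement at all: it is imported verbatim from \cite[Theorem 9 and Corollary 14]{DJS07}, so citing that reference for the hard step, as you do, is exactly what the authors themselves do, and your reconstruction is sound in outline. Monotonicity and boundedness of $(\mu_k)_k$ do follow from Proposition \ref{equival}(ii): the contracted ideals $\Delta_{i-e+1+k}\cap B_i$ form an increasing chain, so the transcendence degrees ${\rm trdeg}_K\,{\rm Frac}(B_i/(\Delta_{i-e+1+k}\cap B_i))$ are nonincreasing and nonnegative, whence $\mu_k$ is nondecreasing and bounded above by $(n-r)(i+1)+er$. You correctly isolate the only real content, namely that $\mu_k=\mu_{k+1}$ must force $\mu_{k'}=\mu_k$ for all $k'\ge k$, and for that you propose the marginal-rank/concavity argument of \cite{DJS07}. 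Note, however, that within the present paper's framework there is a shorter route that avoids the linear algebra entirely: by Proposition \ref{regular sequence en P} each $\Delta_{i-e+1+k}\cap B_i$ is prime, and nested primes of a (localized) polynomial ring whose quotients have equal transcendence degree must coincide, so $\mu_k=\mu_{k+1}$ gives $\Delta_{i-e+1+k}\cap B_i=\Delta_{i-e+2+k}\cap B_i$ for \emph{every} $i$; applying this with $i$ replaced by $i+1$ (legitimate by Proposition \ref{mu no depende de i}) and contracting to $B_i$ yields $\Delta_{i-e+2+k}\cap B_i=\Delta_{i-e+3+k}\cap B_i$, hence $\mu_{k+1}=\mu_{k+2}$, and induction finishes the argument; this is the same device the authors use in the proof of Theorem \ref{sadik}. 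The only blemish in your write-up is the muddled aside about $\mu_k\le kr$ being ``a priori unbounded'' --- the corrected version of that paragraph, resting on Proposition \ref{equival}(ii), is the right one.
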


\begin{definition}
The constant $\sigma\in \mathbb{N}_0$ appearing in Theorem
\ref{definicion de indice} will be called \emph{the $\fP
$-differentiation index of the DAE system $(\Sigma)$}. If the
differential ideal generated by the equations of $(\Sigma)$ is
itself a prime ideal, we say simply \emph{the differentiation index}
of $(\Sigma)$.
\end{definition}

While the dependence of $\sigma$ on the prime ideal $\fP$ is obvious
from the construction, the name ``differentiation index" would seem
rather obscure at this point. This assignation will be justified
throughout the paper by proving that $\sigma$ verifies some
properties customarily associated to the differentiation index such
as estimating the number of derivatives needed either to obtain the
set of constraints that every solution of the system must satisfy
(Subsection \ref{secconstraints}) or to convert the DAE system
$(\Sigma)$ into an explicit ODE (Subsection \ref{ODEdim0}).

\subsection{Manifold of constraints}\label{secconstraints}

A remarkable property associated with most differentiation indices
is that they provide an upper bound for the number of derivatives of
the system needed to obtain all the constraints that must be
satisfied by  the solutions of the system (see for instance
\cite{Fliess}).

For instance, suppose that the input system is of first order and we look for
 the purely polynomial conditions in
$K[X_1,\ldots ,X_n]$ (i.e. no derivatives of the variables appear)
which must be verified \emph{by any solution}. These conditions
define an algebraic variety which is usually called the
\emph{manifold of constraints} of the system. Clearly this variety
is defined by the ideal $[F]\cap K[X_1,\ldots ,X_n]$. From the
noetherianity of the polynomial ring, there exists the smallest non
negative integer $s$ such that the elements of this ideal may be
written using at most $s$ derivatives of the input equations. Under
our conditions of Section \ref{preliminaries}, this minimum integer
is exactly the differentiation index:

\begin{theorem} \label{sadik}
Let $\sigma \in \N_{0}$ be the $\fP$-differentiation index of the
system (\ref{sistema ampliado}). Then, for every $i\in \N_{\ge
e-1}$, the equality of ideals $$\Delta_{i-e+1+\sigma}\cap B_i=\Delta
\cap B_i$$  holds in the ring $B_i$.
 Furthermore, for every $i\in \N_{\ge e-1}$, the
$\fP$-differentiation index $\sigma$ verifies: $\sigma= \min\{ h\in
\N_0 : \Delta_{i-e+1+h} \cap B_i = \Delta \cap B_i\}.$
\end{theorem}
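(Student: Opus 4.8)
The strategy is to translate the statement into the language of transcendence degrees, where Proposition \ref{equival} gives us exact formulas. Recall that $\Delta = \fP\, K\{X\}_\fP$ and that $\Delta_{i-e+1+h} \cap B_i \subseteq \Delta \cap B_i$ always holds, since every $f_j^{(p)} \in \fP$. Both sides are prime ideals of $B_i$ (the left side by Proposition \ref{regular sequence en P}(1), since $\Delta_{i-e+1+h}\cap B_i$ is the contraction of a prime; the right side because $\fP$ is prime), so to prove equality for a given $h$ it suffices to prove that the two domains $B_i/(\Delta_{i-e+1+h}\cap B_i)$ and $B_i/(\Delta\cap B_i)$ have the same transcendence degree over $K$. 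By part (ii) of Proposition \ref{equival}, $\operatorname{trdeg}_K \operatorname{Frac}(B_i/(\Delta_{i-e+1+k}\cap B_i)) = (n-r)(i+1) + er - \mu_{k,i} = (n-r)(i+1)+er-\mu_k$, using Proposition \ref{mu no depende de i}.

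First I would compute $\operatorname{trdeg}_K \operatorname{Frac}(B_i/(\Delta\cap B_i))$. Since $\Delta\cap B_i$ is the inductive limit (over $h$) of the ideals $\Delta_{i-e+1+h}\cap B_i$, which form an increasing chain of primes in the noetherian ring $B_i$, this chain stabilizes; call the stable value $h_0$, so that $\Delta_{i-e+1+h}\cap B_i = \Delta\cap B_i$ for all $h\ge h_0$ and this is the minimum such. Then $\operatorname{trdeg}_K\operatorname{Frac}(B_i/(\Delta\cap B_i)) = (n-r)(i+1)+er-\mu_{h_0}$. Now I claim $h_0 = \sigma$. Indeed, for $h\ge h_0$ the ideals are equal, hence have equal transcendence degree, hence $\mu_h = \mu_{h_0}$ for all $h\ge h_0$; conversely, if $\mu_h=\mu_{h+1}$ then the inclusion $\Delta_{i-e+1+h}\cap B_i \subseteq \Delta_{i-e+1+h+1}\cap B_i$ of primes in $B_i$ is an equality of primes with the same transcendence degree over $K$ (both equal $(n-r)(i+1)+er-\mu_h$), and an inclusion of primes in a domain that induces an isomorphism on fraction fields — more carefully, an inclusion $\mathfrak{a}\subseteq\mathfrak{b}$ of primes in $B_i$ with $\dim B_i/\mathfrak{a} = \dim B_i/\mathfrak{b}$ — forces $\mathfrak{a}=\mathfrak{b}$ because $B_i/\mathfrak{a}$ is a finitely generated $K$-algebra domain, so its dimension equals its transcendence degree and a proper prime quotient strictly drops dimension. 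Hence the chain stabilizes exactly when the $\mu$-sequence does, i.e. at $\sigma$ by Theorem \ref{definicion de indice}. This simultaneously proves $\Delta_{i-e+1+\sigma}\cap B_i = \Delta\cap B_i$ and that $\sigma$ is the minimum $h$ with this property, giving both assertions.

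The one point requiring care — and the likely main obstacle — is the passage from "$\Delta\cap B_i$ is the union/limit of the $\Delta_{i-e+1+h}\cap B_i$" to "the chain stabilizes and the limit is one of its terms." This needs that $\Delta = \bigcup_h \Delta_{i-e+1+h}\, K\{X\}_\fP$, which holds because $\Delta$ is generated by $f_1,\dots,f_r$ and all their derivatives (Proposition \ref{regular sequence en P}(2) identifies $\Delta$ with $[F]K\{X\}_\fP$), and any element of $\Delta$ uses only finitely many of these generators, hence lies in some $\Delta_{i-e+1+h}$ after clearing a denominator away from $\fP$; intersecting with $B_i$ and using noetherianity of $B_i$ then forces stabilization. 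A secondary subtlety is ensuring $\Delta_{i-e+1+h}\cap B_i$ really is prime: this is where Proposition \ref{regular sequence en P}(1) is used, guaranteeing $\Delta_{i-e+1+h} = \Delta_{i-e+1+h}B_{i-e+1+h+?}$ is prime in the relevant localized polynomial ring, and the contraction of a prime is prime. Once these bookkeeping facts about which ring each ideal lives in are pinned down, the transcendence-degree comparison is immediate from Proposition \ref{equival} and Theorem \ref{definicion de indice}.
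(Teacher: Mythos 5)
Your proposal is correct and follows essentially the same route as the paper: both arguments rest on Proposition \ref{equival} to convert the chain $(\Delta_{i-e+1+k}\cap B_i)_k$ of primes into the sequence of transcendence degrees $(n-r)(i+1)+er-\mu_k$, use Theorem \ref{definicion de indice} to locate where that sequence (hence the chain) stabilizes, and identify the stable term with $\Delta\cap B_i$ by the finitely-many-derivatives/denominator-clearing argument. The only cosmetic difference is the order of steps (you stabilize by noetherianity first and then match $h_0$ with $\sigma$, the paper does the reverse), plus a harmless imprecision in calling $B_i/\mathfrak{a}$ a finitely generated $K$-algebra rather than a localization of one with the same fraction field.
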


\begin{proof} Fix an index $i\in \N_{\ge e-1}$.
Let us consider the increasing chain $(\Delta_{i-e+1+k}\cap
B_i)_{k\in \N_0}$ of prime ideals in the  ring $B_i$. {}From
Proposition \ref{equival}, for every $k\in \N_0$, we have
\begin{equation}\label{trdegmu}
{\rm trdeg}_K ({\rm Frac}(B_i/\Delta_{i-e+1+k}\cap B_i))
 =  (n-r)(i+1) + er - \mu_{k}.
\end{equation}
Since $\mu_{k}$ is stationary for $k\ge \sigma$ (Theorem
\ref{definicion de indice}), all the prime ideals
$\Delta_{i-e+1+k}\cap B_i$ have the same dimension for $k\ge \sigma$
and the chain of prime ideals becomes stationary for $k\ge \sigma$.

It only remains to prove that the largest ideal of the chain
coincides with $\Delta \cap B_i$. One inclusion is obvious. For the
other, let $f$ be an arbitrary element of $\Delta \cap B_i$, then
there exist differential polynomials $h, a_{ij}\in k\{X\}$, $h\notin
\fP$  such that
\[f(X^{[i]})=\sum_{i=1}^r\sum_{j}\frac{a_{ij}f_i^{(j)}}{h}.\]
If  $N$ is an integer greater than $i+\sigma$ and than  the maximal
order of the variables $X$ appearing in this equality, we have $f\in
B_i$ and $f\in \Delta_{N-e+1} \subset  B_N$. Then $f\in
\Delta_{N-e+1}\cap B_i$ and, since the above chain of ideals is
stationary, $f\in \Delta_{i-e+1+\sigma}\cap B_i$. This finishes the
proof of the first assertion of the Theorem.

In order to prove the second part of the statement, for each $i\in
\mathbb{N}_{\ge e-1}$, let $h_i$ be the smallest non-negative
integer such that $\Delta_{i-e+1+h_i}\cap B_i  = \Delta\cap B_i $.
By the definition of $h_i$, the transcendence degrees ${\rm trdeg}_K
({\rm Frac}(B_i/\Delta_{i-e+1+k}\cap B_i))$ coincide for $k\ge h_i$,
and so, $\mu_{k} $ is constant for $k\ge h_i$ (see identity
(\ref{trdegmu}) above). This implies that $\sigma  \le h_i$. The
equality follows from the first part of the statement and the
minimality of $h_i$.
\end{proof}

\begin{remark}
Taking $i=e-1$ in the last assertion of  Theorem \ref{sadik}, we
have the following alternative definition of the
$\fP$-differentiation index: $$ \sigma= \min\{ h\in \N_0 :
\Delta_{h} \cap B_{e-1} = \Delta \cap B_{e-1}\}.$$
\end{remark}

\section{The Hilbert-Kolchin polynomial of the ideal
$\fP$} \label{seccion hilbert kolchin}

Let $\fP \subset K\{ X \}$ be a minimal prime differential ideal
such that the system $(\Sigma)$ is quasi-regular at $\fP$. The last
assertion in Proposition \ref{regular sequence en P} states that the
differential dimension of the prime differential ideal $\fP$ is
$n-r$. Following \cite[Chapter II, Section 12, Theorem 6]{kol}, the
transcendence degree of the fraction field of the domain
$A_i/(A_i\cap \fP)$ over the ground field $K$ equals $(n-r)(i+1)+c$
for all $i$ sufficiently big, where $c$ is a non-negative integer
constant which depends only on the ideal $\fP$. This constant $c$ is
called \emph{the order of $\fP$} and denoted by $\text{ord}(\fP)$.

The polynomial $H_\fP(T):=(n-r)(T+1)+\text{ord}(\fP)$ is called the
Hilbert-Kolchin polynomial of $\fP$. The minimum of the indices
$i_0$ where $H_\fP(i)=\textrm{trdeg}_K\textrm{Frac}(A_i/(A_i\cap
\fP))$ for all $i\ge i_0$ is known as \emph{the Hilbert-Kolchin
regularity of the ideal $\fP$}.

\subsection{The Hilbert-Kolchin regularity}

A well-known result from the theory of \emph{characteristic sets}
states that the Hilbert-Kolchin regularity of a prime differential
ideal is equal to $\max \{\text{ord}(C) : C \in \mathcal{C}\}-1$
where $\mathcal{C}$ is a characteristic set of the differential
ideal  for an \emph{orderly ranking} (see \cite[Ch.~II, Section 12,
Th.~6 (d)]{kol} and \cite[Theorem 3.3]{carra}).

The results developed so far enable us to exhibit the following
simple upper bound for the Hilbert-Kolchin regularity of the ideal
$\fP$ depending only on the maximal order of derivation of the
variables involved in the system (\ref{sistema ampliado}),
independently of knowing a characteristic set:

\begin{theorem} \label{hilbert constante}
The Hilbert-Kolchin regularity of the ideal $\fP$ is bounded by
$e-1$.

\noindent In particular, for first-order systems of type
(\ref{sistema ampliado}) (in other words, for the case $e=1$), ${\rm
trdeg}_K{\rm Frac}(A_i/(A_i\cap \fP))=(n-r)(i+1)+{\rm ord}(\fP)$ for
all $i\in \NN_0$.
\end{theorem}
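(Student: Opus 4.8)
The plan is to show that for $i\ge e-1$ the transcendence degree ${\rm trdeg}_K{\rm Frac}(A_i/(A_i\cap\fP))$ already equals the Hilbert-Kolchin value $(n-r)(i+1)+{\rm ord}(\fP)$, so that the regularity (the first index from which the polynomial formula holds) is at most $e-1$. The key observation is that the order ${\rm ord}(\fP)$ is defined as the eventual constant $c$ in the formula $(n-r)(i+1)+c$, and the machinery already developed gives us control of ${\rm trdeg}_K{\rm Frac}(A_i/(A_i\cap\fP))$ for \emph{every} $i\ge e-1$ through the localized rings $B_i$ and the stabilized sequence $\mu_k$.

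First I would note that, since localization does not change the fraction field, ${\rm Frac}(A_i/(A_i\cap\fP))={\rm Frac}(B_i/\mathfrak N_i)$ is the residue field of $B_i$, and that $\mathfrak N_i=\Delta\cap B_i$ (by Proposition~\ref{regular sequence en P}(2), $\fP K\{X\}_\fP$ is generated by $f_1,\dots,f_r$, so the contraction of $\Delta=\fP K\{X\}_\fP$ to $B_i$ is exactly the maximal ideal $\mathfrak N_i$). Next, by Theorem~\ref{sadik}, for every $i\ge e-1$ we have $\Delta_{i-e+1+\sigma}\cap B_i=\Delta\cap B_i=\mathfrak N_i$; combining this with Proposition~\ref{equival}(ii) applied with $k=\sigma$ yields
\[
{\rm trdeg}_K{\rm Frac}(B_i/\mathfrak N_i)={\rm trdeg}_K{\rm Frac}(B_i/(\Delta_{i-e+1+\sigma}\cap B_i))=(n-r)(i+1)+er-\mu_\sigma
\]
for all $i\ge e-1$. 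Thus ${\rm trdeg}_K{\rm Frac}(A_i/(A_i\cap\fP))=(n-r)(i+1)+(er-\mu_\sigma)$ for every $i\ge e-1$, which is already of the form $(n-r)(i+1)+c$ with the \emph{same} constant $c=er-\mu_\sigma$ for all such $i$. Since this is in particular the eventual behaviour, $c={\rm ord}(\fP)$, and the Hilbert-Kolchin polynomial $H_\fP(T)=(n-r)(T+1)+{\rm ord}(\fP)$ agrees with the transcendence degree from $i=e-1$ onward; hence the regularity is bounded by $e-1$. The ``in particular'' clause for $e=1$ is then immediate, noting that $A_0\cap\fP=\mathfrak N_0$ poses no difficulty since $i\ge e-1=0$ covers all $i\in\NN_0$.

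The step I expect to require the most care is the identification $\Delta\cap B_i=\mathfrak N_i$, i.e. that contracting the differential ideal $\Delta=\fP K\{X\}_\fP$ down to the noetherian local ring $B_i$ gives precisely the maximal ideal and not something smaller; this rests on Proposition~\ref{regular sequence en P}(2) together with the fact that $B_i$ sits inside $K\{X\}_\fP$ compatibly with the maximal ideals, but it should be spelled out cleanly. Everything else is bookkeeping with transcendence degrees via the already-proven Propositions~\ref{equival} and the stabilization Theorem~\ref{definicion de indice}.
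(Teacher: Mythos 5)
Your proposal is correct and follows essentially the same route as the paper: both rest on Theorem \ref{sadik} (to replace $\Delta\cap B_i$ by $\Delta_{i-e+1+\sigma}\cap B_i$) combined with Proposition \ref{equival}(ii) at $k=\sigma$, the paper phrasing the conclusion as the increment $n-r$ between consecutive indices while you write out the resulting formula $(n-r)(i+1)+er-\mu_\sigma$ directly for all $i\ge e-1$. The identification $\Delta\cap B_i=\mathfrak{N}_i$ that you flag is indeed the point the paper also uses implicitly, and your justification of it via Proposition \ref{regular sequence en P}(2) is sound.
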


\begin{proof}
Since for all $i\in \mathbb{N}_0$ the fraction field of the domain
$A_i/(A_i\cap \fP)$ coincides with the residual field of the local
ring obtained from $A_i$ after localization at the prime ideal
$A_i\cap \fP$, we have that $\textrm{Frac}(A_i/(A_i\cap
\fP))=\textrm{Frac}(B_i/(B_i\cap \Delta))$. % and then  the order and
%the Hilbert-Kolchin regularity of the ideal $\fP$ can be computed
%replacing $A_i$ by $B_i$ and $\fP$ by $\Delta$.
Then, we have that $\textrm{trdeg}_K\textrm{Frac}(A_i/(A_i\cap
\fP))= \textrm{trdeg}_K\textrm{Frac}(B_i/\Delta \cap B_i)$ and so,
it is enough to show that, for all $i\ge e-1$,
$\textrm{trdeg}_K\textrm{Frac}(B_i/\Delta \cap
B_i)+n-r=\textrm{trdeg}_K\textrm{Frac}(B_{i+1}/\Delta \cap
B_{i+1})$.

Fix an index $i\ge e-1$.  Due to Theorem \ref{sadik}, we have that
$\Delta \cap B_i = \Delta_{i+1-e+\sigma} \cap B_i$ and $\Delta \cap
B_{i+1} = \Delta_{i+2-e+\sigma} \cap B_{i+1}$. Thus, using
Proposition \ref{equival}  we obtain:
\begin{eqnarray*}
\textrm{trdeg}_K\textrm{Frac}(B_{i+1}/(B_{i+1}\cap \Delta)) &=& (n-r)(i+2) + er - \mu_{\sigma},\\
\textrm{trdeg}_K\textrm{Frac}(B_i/(B_i\cap \Delta)) &=& (n-r)(i+1) +
er -\mu_{\sigma}.
\end{eqnarray*}
Hence, the result holds.
\end{proof}

\bigskip

We point out that as a consequence of this theorem we deduce that
the order of any characteristic set of $\fP$ for an orderly ranking
is bounded by the maximal order $e$.

\subsection{The order}

As a consequence of the results of the previous sections we are able
to prove the following characterization for the order of the ideal
$\fP$.

\begin{proposition} \label{ordenmu} Let $\fP\subset K\{X_1\dots, X_n\}$ be a minimal prime differential ideal containing
polynomials  $f_1,\dots, f_r\in K\{X_1\dots, X_n\}$. Assume that the
DAE system defined by $f_1,\dots, f_r$ is quasi-regular at $\fP$.
Then, ${\rm ord}(\fP)= er-\mu_\sigma$.
\end{proposition}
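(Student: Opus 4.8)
The plan is to evaluate the Hilbert-Kolchin polynomial of $\fP$ at a convenient index and match it against the transcendence-degree formula of Proposition \ref{equival}. By definition, ${\rm ord}(\fP)$ is the constant $c$ such that ${\rm trdeg}_K{\rm Frac}(A_i/(A_i\cap\fP)) = (n-r)(i+1)+c$ for all $i$ large enough. The first step is to invoke Theorem \ref{hilbert constante}: the Hilbert-Kolchin regularity of $\fP$ is at most $e-1$, so this equality already holds for every $i\ge e-1$. Hence I may simply fix one such index, say $i=e-1$ (or keep $i$ generic with $i\ge e-1$), and compute ${\rm trdeg}_K{\rm Frac}(A_i/(A_i\cap\fP))$ by a second route.

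For that second route I would reuse the identification made in the proof of Theorem \ref{hilbert constante}: the fraction field of the domain $A_i/(A_i\cap\fP)$ equals the residual field of the localization $B_i$ at $A_i\cap\fP$, which in turn equals ${\rm Frac}(B_i/(\Delta\cap B_i))$, where $\Delta=\fP K\{X\}_\fP$. Then Theorem \ref{sadik} applies: since $i\ge e-1$, we have $\Delta\cap B_i = \Delta_{i-e+1+\sigma}\cap B_i$. Now Proposition \ref{equival}(ii), taken with $k=\sigma$, gives
\[
{\rm trdeg}_K{\rm Frac}(B_i/(\Delta_{i-e+1+\sigma}\cap B_i)) = (n-r)(i+1) + er - \mu_\sigma.
\]
Combining the two expressions for the same transcendence degree yields $(n-r)(i+1) + {\rm ord}(\fP) = (n-r)(i+1) + er - \mu_\sigma$, and cancelling $(n-r)(i+1)$ gives ${\rm ord}(\fP) = er-\mu_\sigma$, as claimed.

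I do not expect a genuine obstacle here: the statement is essentially a bookkeeping consequence of the three previously established results (the regularity bound of Theorem \ref{hilbert constante}, the stabilization of the prolongation chain in Theorem \ref{sadik}, and the dimension count in Proposition \ref{equival}). The only point deserving a line of care is the chain of identifications of fraction fields with residual fields of the local rings $B_i$, ensuring the localization does not alter the transcendence degree; but this is exactly the observation already used in the proof of Theorem \ref{hilbert constante}, so it can be cited rather than reproved.
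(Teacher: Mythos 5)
Your proof is correct and follows essentially the same route as the paper's: both compute ${\rm trdeg}_K{\rm Frac}(A_i/(A_i\cap\fP))$ twice, once via the Hilbert-Kolchin polynomial and once via Proposition \ref{equival} applied to a stabilized prolongation ideal, and cancel. The only (harmless) difference is that you pin down the index with Theorem \ref{hilbert constante} ($i\ge e-1$) and Theorem \ref{sadik} ($\Delta\cap B_i=\Delta_{i-e+1+\sigma}\cap B_i$), whereas the paper works at the unknown regularity index $i_0$ and obtains stabilization from the Noetherianity of $B_{i_0}$ together with Theorem \ref{definicion de indice}; since the results you cite are all established before this proposition, there is no circularity.
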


\begin{proof}
Let $i_0$ be the regularity of the Hilbert-Kolchin function of
$\fP$. Then, for every $i\ge i_0$, the polynomial $H_\fP(i)$ agrees
with the Hilbert-Kolchin function of $\fP$, that is $
(n-r)(i+1)+\textrm{ord}(\fP) =
\textrm{trdeg}_K\textrm{Frac}(A_i/(\fP \cap A_i))$. Localizing at
the prime ideal $\fP \cap A_i$, we have that $\textrm{Frac}(A_i/(\fP
\cap A_i)) = B_i/\mathfrak{N}_i = B_i /(\Delta \cap B_i)$ for every
$i\ge i_0$, where $\Delta$ is the differential ideal generated by
$f_1,\dots, f_r$ in $K\{X\}$.

Since $B_{i_0}$ is a Noetherian ring, for $h\in \N_0$ sufficiently
big, we have that the equality $\Delta \cap B_{i_0} = \Delta_h \cap
B_{i_0}$ holds and so, $ (n-r)(i_0+1)+\textrm{ord}(\fP) =
\textrm{trdeg}_K(B_{i_0}/(\Delta_h\cap B_{i_0}))$. Now, by
Proposition \ref{equival}, $\textrm{trdeg}_K(B_{i_0}/(\Delta_h\cap
B_{i_0}) )= (n-r) (i_0+1) +er -\mu_{h-i_0+e-1}$, which implies that
$\textrm{ord}(\fP) =er -\mu_{h-i_0+e-1}$ for $h$ sufficiently big.
Therefore from Theorem \ref{definicion de indice} we have that
$\textrm{ord}(\fP) =er -\mu_{\sigma}$.
\end{proof}

\section{Jacobi-type bounds} \label{seccion jacobi}

In \cite{jacobi} and \cite{jacobi2}, Jacobi introduces a parameter
associated to the orders of derivations in a DAE system and
conjectures an upper bound for the order of the system in terms of
this number.

\begin{definition}\label{Jacobinumber} Let $A \in \N_0^{s\times m}$, $s\le m$, be an
integer matrix. The \emph{Jacobi number} of $A$ is defined to be
$$J(A):= \max\Big\{\sum_{i=1}^s a_{i\tau(i)} \mid \tau:\{1,\dots, s\} \to \{1,\dots,
m\} \text{ is an injection}\Big\}.$$
\end{definition}

Jacobi asserts that the order of a DAE system $(\Sigma)$ with $r=n$
equations is bounded above by $J(\mathcal{E})$, where
$\mathcal{E}:=(\epsilon_{ij})_{1\le i, j\le n}$ is the matrix whose
entries are the maximal derivation orders $\epsilon_{ij}$ of the
variables $X_j$ appearing in the polynomials $f_i$ and
$-\infty$ whenever the variable $X_j$ does not appear in $f_i$.

We consider also the integer matrix ${\mathcal E}_0$ which consists in the previous matrix
$\mathcal{E}$ setting $0$ instead of each $-\infty$.
Clearly $J({\mathcal E})\le J({\mathcal E}_0)$.

In this section, we will show that the sum of differentiation index of
a quasi-regular DAE system at a prime differential ideal $\fP$ and
the order of  $\fP$ can be bounded in terms of the maximum and
minimum orders of the equations and the Jacobi number
$J(\mathcal{E}_0)$. With the same techniques we can also recover the Jacobi
bound for the order of the system $(\Sigma)$ (see Theorem \ref{cotaorden} below).

%\subsection{A Jacobi-type upper bound for the $\fP$-differentiation index}
Our main result of this section is the following:

\begin{theorem}\label{indexbound}
Let $(\Sigma)$ be a DAE system defined by polynomials $f_1,\dots,
f_r \in K\{X_1,\dots, X_n\}$  and let $\fP$ be a minimal prime
differential ideal containing $f_1,\dots, f_r$ such that $(\Sigma)$
is quasi-regular at $\fP$. Consider the integer matrix
$\mathcal{E}_0:=(\epsilon_{ij})_{1\le i\le r, 1\le j \le n}$, where
$\epsilon_{ij} := {\rm ord}_{X_j}(f_i)$ and $0$ if the variable $X_j$ does not appear in $f_i$.
Then, the
$\fP$-differentiation index $\sigma$ of the system $(\Sigma)$ and
the order ${\rm ord}(\fP)$ of the differential ideal $\fP$ satisfy
$$\sigma + {\rm ord}(\fP) \le J(\mathcal{E}_0) + \max\{\epsilon_{ij}\}
- \min\{\epsilon_{ij}\}.$$
\end{theorem}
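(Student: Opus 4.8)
The plan is to translate the inequality into a statement about the sequence $\mu_k$ and then exploit the block-triangular structure of the pseudo-Jacobian matrices $\fJ_{k,i}$ together with a combinatorial argument on the orders $\epsilon_{ij}$. Recall from Theorem \ref{sadik} and Proposition \ref{ordenmu} that $\sigma$ is determined by the stabilization of $\mu_k$ and that ${\rm ord}(\fP)=er-\mu_\sigma$, where $e=\max\{\epsilon_{ij}\}$. Writing $e_{\min}:=\min\{\epsilon_{ij}\}$, the desired bound becomes
\[
\sigma + er - \mu_\sigma \le J(\mathcal{E}_0) + e - e_{\min},
\]
so it suffices to find a good lower bound for $\mu_\sigma$ in terms of $\sigma$ and, dually, to control how fast $\mu_k$ can grow before stabilizing. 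Since $\mu_k<\mu_{k+1}$ for $k<\sigma$ (Theorem \ref{definicion de indice}), we get the trivial lower bound $\mu_\sigma\ge\sigma$; the point is that this is far from enough, and we must instead extract more structure.

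The key step is to analyze ${\rm rank}_{\mathbb{K}}(\fJ_{k,i})$ for a judicious choice of $i$, say $i=e-1$. The matrix $\fJ_{k,e-1}$ is block lower-triangular with diagonal blocks $\frac{\partial F^{(p)}}{\partial X^{(p+1)}}$, $p=0,\dots,k-1$; because $F^{(p)}$ has order at most $p+e$ in each variable, the entry in row-block $i$, column $X_j$ at jet-level $q$ can be nonzero only when $q$ is within the appropriate range dictated by $\epsilon_{ij}$. The strategy, following Lando's method for linear equations as cited in the introduction, is to bound the rank of $\fJ_{k,i}$ from above by a sum $\sum_i \epsilon_{i\tau(i)}$-type quantity by a careful bookkeeping of which jet-variables $X_j^{(q)}$ can actually appear as columns with nonzero entries in the relevant row-blocks. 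This yields an inequality of the shape ${\rm rank}_{\mathbb{K}}(\fJ_{k,i}) \le kr - (k-1)\cdot r \cdot(\text{something}) + (\text{Jacobi-type term})$ once $k$ is large enough; equivalently, a lower bound $\mu_k \ge$ (an expression that for $k=\sigma$ reads $er - J(\mathcal{E}_0) + (\sigma - e + e_{\min}))$ or similar. Rearranging this lower bound for $\mu_\sigma$ and combining with ${\rm ord}(\fP)=er-\mu_\sigma$ gives exactly the claimed bound. To make the combinatorial step precise I would introduce, for each $k$, a bipartite incidence structure between the $kr$ rows of $\fJ_{k,i}$ and the columns, observe that a nonzero maximal minor picks an injection, and bound the number of rows that can be "active" before the column supply runs out — the supply being governed by the $\epsilon_{ij}$ via the triangular shape. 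This is essentially where $J(\mathcal{E}_0)$ and the correction term $e-e_{\min}$ enter.

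The main obstacle I anticipate is the bookkeeping in this rank estimate: one must track simultaneously (a) the triangular block structure, which restricts the horizontal support of each row-block of $\fJ_{k,i}$, (b) the vertical support determined by $\epsilon_{ij}$ — in row-block $p$ the variable $X_j$ contributes columns only up to jet-level roughly $p+\epsilon_{ij}$ — and (c) the fact that stabilization of $\mu_k$ at $k=\sigma$ forces the extra rows beyond level $\sigma$ to contribute full rank $r$ each, which is what converts a crude count into the sharp bound. A clean way to organize this is probably to first prove the inequality for a single index $i$ (using Proposition \ref{mu no depende de i} to know the answer is $i$-independent) and to choose $i$ large enough that all the $\epsilon_{ij}$-induced truncations are "in the stable regime". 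I would also need to verify the degenerate cases where some column of $\mathcal{E}_0$ is all zeros (a variable appearing nowhere, forcing $n-r$ differential transcendence) are consistent with the count; these are handled automatically by the convention $\epsilon_{ij}:=0$ in $\mathcal{E}_0$, which is precisely why the \emph{weak} Jacobi number rather than the classical one appears.
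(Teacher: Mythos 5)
Your outline follows the same road as the paper's proof: reduce everything to the sequence $\mu_k$ via ${\rm ord}(\fP)=er-\mu_\sigma$, and bound ${\rm rank}_{\mathbb K}(\fJ_{k,e-1})$ from above using the sparsity pattern that the orders $\epsilon_{ij}$ impose on the blocks $\partial f_i^{[k-1]}/\partial X_j^{[e,\,e+k-1]}$, in the spirit of Lando. But two steps need repair. The first is logical: the only lower bound you can hope to extract from the rank estimate is \emph{uniform} in $k$, namely $\mu_k\ge er-J(\mathcal{E}_0)$ for all $k\ge e-\min\{\epsilon_{ij}\}$; a bound that grows with $k$, such as the expression you propose at $k=\sigma$, cannot come from the rank estimate alone, since $\mu_k$ is eventually constant. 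The missing ingredient is to combine the uniform lower bound with the strict monotonicity $\mu_k<\mu_{k+1}$ for $k<\sigma$ (Theorem \ref{definicion de indice}) and with $\mu_k\le\mu_\sigma=er-{\rm ord}(\fP)$: once $k\ge e-\min\{\epsilon_{ij}\}$ the sequence can increase at most $J(\mathcal{E}_0)-{\rm ord}(\fP)$ more times, whence $\sigma\le e-\min\{\epsilon_{ij}\}+J(\mathcal{E}_0)-{\rm ord}(\fP)$. Your item (c) looks at the rows \emph{beyond} level $\sigma$ contributing full rank, which is the wrong end of the sequence; what matters is that each increment before $\sigma$ is at least $1$.

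The second, more serious gap is the combinatorial core. After replacing each block by its binary support (a $k\times k$ band matrix of type $T_{k,\,k-e+\epsilon_{ij}}$), one must show that the Jacobi number of the resulting $kr\times kn$ matrix is at most $(k-e)r+J(\mathcal{E}_0)$. The supply-and-demand count you describe (each block contributes at most $k-e+\epsilon_{ij}$ matched entries, subject to row and column budgets of $k$) is not sufficient: for $r=n=2$ with $\epsilon_{11}=\epsilon_{12}=e$ and $\epsilon_{21}=\epsilon_{22}=0$ it allows a total of $2k$, whereas the required bound is $(k-e)\cdot 2+J(\mathcal{E}_0)=2k-e$; the obstruction is that the blocks in a common column-group compete for the same few low-index columns, which a per-block cardinality count does not see. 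The paper closes this gap with the K\"onig--Egerv\'ary min--max theorem: an optimal covering $(\lambda,\phi)$ of the integer matrix $\mathcal{S}_k=(k-e+\epsilon_{ij})$ is expanded to a covering of the block binary matrix of the same total weight (Lemma \ref{jacobi triangular}), giving $J(T(\mathcal{S}_k))=J(\mathcal{S}_k)=(k-e)r+J(\mathcal{E}_0)$, after which Lemma \ref{jacobi binary} converts the Jacobi number into a rank bound. Some duality argument of this kind (equivalently, a K\"onig/Hall deficiency argument on the bipartite incidence structure you mention) is indispensable; without it the rank estimate, and hence the theorem, is not established.
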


Before proving the Theorem \ref{indexbound}, we will show some
auxiliary technical results concerning the Jacobi number of integer
matrices.

\bigskip

Let $L$ be an arbitrary field. Given a matrix $A = (a_{ij}) \in L^{s\times m}$, we can estimate its
rank by means of the Jacobi number of an associated binary matrix:
we define $B(A) \in L^{s \times m}$ as the matrix whose entries are
$$
B(A)_{ij} = \begin{cases} 1 & \textrm{ if }  a_{ij} \ne 0 \cr 0 &
\textrm{ if }  a_{ij} = 0
\end{cases}
$$

\begin{lemma}\label{jacobi binary}
Let $A \in L^{s \times m}$ with $s\le m$. Then ${\rm rank}(A) \le
J(B(A))$.
\end{lemma}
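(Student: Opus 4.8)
The plan is to reduce the statement to a purely combinatorial fact about the pattern of nonzero entries of $A$, namely that a nonzero $t\times t$ minor of $A$ forces a permutation of $t$ of the columns along which all the corresponding entries of $A$ are nonzero. First I would let $t := {\rm rank}(A)$ and choose a $t\times t$ submatrix $A'$ of $A$, obtained by selecting row indices $i_1 < \dots < i_t$ and column indices $j_1 < \dots < j_t$, whose determinant is nonzero. Expanding $\det(A')$ by the Leibniz formula as a signed sum over permutations of $\{1,\dots,t\}$, the fact that this determinant is nonzero guarantees that at least one summand is nonzero; that is, there is a permutation $\pi$ of $\{1,\dots,t\}$ with $a_{i_\ell\, j_{\pi(\ell)}} \ne 0$ for every $\ell = 1,\dots,t$.

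Next I would translate this into an injection realising a large value of $J(B(A))$. Define $\tau : \{1,\dots,s\} \to \{1,\dots,m\}$ by setting $\tau(i_\ell) := j_{\pi(\ell)}$ for $\ell = 1,\dots,t$ and extending $\tau$ arbitrarily to an injection on the remaining row indices (possible since $s \le m$ and only $t$ columns have been used so far). By construction $B(A)_{i_\ell\,\tau(i_\ell)} = 1$ for each of the $t$ chosen rows, so $\sum_{i=1}^s B(A)_{i\,\tau(i)} \ge t$. Since $J(B(A))$ is the maximum of such sums over all injections $\tau$, we conclude $J(B(A)) \ge t = {\rm rank}(A)$, which is exactly the claim.

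There is essentially no serious obstacle here: the only point requiring a little care is the degenerate case $t = 0$ (the zero matrix), where the inequality ${\rm rank}(A) = 0 \le J(B(A))$ is trivial since $J$ of any matrix with non-negative entries is non-negative; and one should note that the extension of $\tau$ to a full injection on $\{1,\dots,s\}$ uses the hypothesis $s \le m$, while the added entries can only increase (never decrease) the sum since all entries of $B(A)$ are $0$ or $1$. The heart of the argument is simply the observation that a nonzero determinant has at least one nonzero term in its expansion over permutations.
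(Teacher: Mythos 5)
Your proof is correct and rests on the same key idea as the paper's: the Leibniz expansion of a determinant ties a nonvanishing minor to an injection selecting only nonzero entries of $A$, which is then extended (using $s\le m$) to an injection on all rows. The only difference is direction — you argue directly from a nonzero ${\rm rank}(A)\times{\rm rank}(A)$ minor to a large value of $J(B(A))$, whereas the paper argues contrapositively that every $k\times k$ minor with $k>J(B(A))$ vanishes — so this is essentially the same proof.
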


\begin{proof} Set $J:= J(B(A))$. By definition, $J = \max_{\tau}
\{ \sum_{1\le h \le s} B(A)_{h \tau(h)}\}$, where the maximum is
taken over all injections $\tau:\{1,\dots, s\}\to \{ 1,\dots,
m\}$. Since $B(A)_{ij} =0$ or $1$ according as $ a_{ij}$ equals zero
or not, it follows that for every $\tau$, the tuple $(
a_{1\tau(1)}, \dots, a_{s \tau(s)})$ has at most $J$ nonzero
entries.

Let $k$ with $J<k\le s$ and choose indices $1\le i_1<\dots< i_k\le
s$ and $1\le j_1<\dots< j_k\le m$. Consider the $k\times k$ square
submatrix of $A$ corresponding to the intersection of the rows
indexed by $i_1,\dots, i_k$ and the columns indexed by $j_1,\dots,
j_k$. The determinant of this submatrix equals $\sum_{\nu} {\rm
sg}(\nu) \prod_{1\le h\le k} a_{i_h \nu(i_h)}$, where the sum runs
over all bijections $\nu : \{i_1,\dots, i_k\} \to \{j_1,\dots,
j_k\}$. Now, for each $\nu$, the vector $(a_{i_1 \nu(i_1)}, \dots,
a_{i_k \nu(i_k)})$ has at least one zero entry (since $k> J$ and
every $\nu$ can be extended to an injection $\tau$) and so, the
determinant vanishes. We conclude that ${\rm rank}(A) \le J$.
\end{proof}

\bigskip

Our second auxiliary result relates the Jacobi number of an integer
matrix to the Jacobi number of an associated binary matrix of a
particular kind.

Fix $k\in \N$. For every $0\le a\le k$, let $T_{k, a}\in
\N_0^{k\times k}$ be the lower triangular matrix defined by
$$(T_{k,a})_{ij}=\begin{cases} 1 & {\rm if } \ k-a \le i-j \le k-1\cr 0 &
{\rm otherwise},
\end{cases}
$$
i.e. $T_{k,a}$ has the $a$ lower diagonals with all their entries
equal to $1$ and all the remaining entries of the matrix are zero:

{\footnotesize
\begin{align*}
&
%TCIMACRO{\QATOP{{}}{a\left\{
%\begin{tabular}
%[c]{p{0cm}}%
%\\
%\multicolumn{1}{c}{}\\
%\multicolumn{1}{c}{}\\
%\multicolumn{1}{c}{}\\
%\multicolumn{1}{c}{}\\
%\multicolumn{1}{c}{}\\
%\multicolumn{1}{c}{}%
%\end{tabular}
%\right.  }}%
%BeginExpansion
\genfrac{}{}{-1.5pt}{}{{}}{a\left\{
\begin{tabular}
[c]{p{-7cm}}%
\\
\multicolumn{1}{c}{}\\
\multicolumn{1}{c}{}\\
\multicolumn{1}{c}{}\\
\multicolumn{1}{c}{}\\
%\multicolumn{1}{c}{}\\
%\multicolumn{1}{c}{}%
\end{tabular}
\right.  }%
%EndExpansion
\!\!\!\left(
\begin{tabular}
[c]{ccccccc}%
$0$ & $\cdots$ & $\cdots$ &  &  & $\cdots$ & $0$\\
$\vdots$ & $\ddots$ &  &  &  &  & $\vdots$\\
$0$ &  & $\ddots$ &  &  &  & \\
\cline{1-4}\cline{2-3}%
\multicolumn{1}{|c}{$1$} & $0$ & $\cdots$ & $0$ &
\multicolumn{1}{|c}{$\cdots
$} & $\cdots$ & \\
\multicolumn{1}{|c}{$1$} & $\ddots$ & $\ddots$ &  &
\multicolumn{1}{|c}{$\ddots$} &  & $\vdots$\\
\multicolumn{1}{|c}{$\vdots$} & $\ddots$ & $\ddots$ & $0$ &
\multicolumn{1}{|c}{} & $\ddots$ & $\vdots$\\
\multicolumn{1}{|c}{$1$} & $\cdots$ & $1$ & $1$ &
\multicolumn{1}{|c}{$0$} &
$\cdots$ & $0$\\
\cline{1-4}%
\end{tabular}
\ \ \right)\ .  \\
\qquad \qquad & \qquad \quad
\begin{tabular}
[c]{c}{}%
$\underbrace{\ \qquad \qquad \qquad \qquad \quad }$\\
$a$
\end{tabular}
\end{align*}
}

Given a matrix $A  = (a_{ij})\in \N_0^{s\times m}$ and a positive
integer $k\ge \max_{i, j} \{a_{ij}\}$, we define a $ks \times km$
block binary matrix $T_k(A)$ as follows:
$$T_k(A) = \begin{pmatrix} T_{k, a_{11}} & \dots & T_{k, a_{1m}} \cr
\vdots & & \vdots \cr T_{k, a_{s1}} & \dots &T_{k,
a_{sm}}\end{pmatrix}.$$

\begin{lemma}\label{jacobi triangular}
Let $A \in \N_0^{s\times m}$ with $s\le m$. Then, for every $k\ge
\max_{i,j} \{a_{ij}\}$, we have $J(A) = J(T_k(A))$.
\end{lemma}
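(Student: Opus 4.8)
The plan is to establish the two inequalities $J(A) \le J(T_k(A))$ and $J(T_k(A)) \le J(A)$ separately. For the first, let $\tau:\{1,\dots,s\}\to\{1,\dots,m\}$ be an injection realizing $J(A)$, so $J(A)=\sum_{i=1}^s a_{i\tau(i)}$. I want to produce an injection from $\{1,\dots,ks\}$ to $\{1,\dots,km\}$ whose weight in $T_k(A)$ is at least $J(A)$. The natural idea is to work block by block: in the $i$-th block of rows (rows $k(i-1)+1,\dots,ki$) use only columns from the $\tau(i)$-th block of columns, and inside that $k\times k$ block $T_{k,a_{i\tau(i)}}$ pick a diagonal-type partial matching that collects $a_{i\tau(i)}$ ones. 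Concretely, since $T_{k,a}$ has ones exactly on the $a$ subdiagonals just below the main diagonal, the $a$ positions $(j+ (k-a)\dots)$, i.e. choosing in row $j$ the column $j-(k-a)$ for $j=k-a+1,\dots,k$, gives $a$ ones lying in distinct rows and distinct columns of that block. Doing this in every block simultaneously, and leaving the remaining $k-a_{i\tau(i)}$ rows of block $i$ matched (injectively, inside unused columns of the same block) to zero entries, yields a global injection of weight exactly $\sum_i a_{i\tau(i)} = J(A)$. Hence $J(T_k(A)) \ge J(A)$.

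For the reverse inequality I would use Lemma \ref{jacobi binary} together with the evaluation of a determinant, or argue directly by a combinatorial/counting argument. The cleaner route: interpret $J(T_k(A))$ as the maximum weight of a partial matching in the bipartite graph on $ks$ left vertices and $km$ right vertices, where an edge $(p,q)$ with $p$ in row-block $i$ and $q$ in column-block $j$ is present iff $(T_{k,a_{ij}})$ has a $1$ in the corresponding position — in particular an edge exists only when $a_{ij}\ge 1$, and the number of edges between row-block $i$ and column-block $j$ that can appear in \emph{any} matching is at most $a_{ij}$ (because $T_{k,a}$ has only $a$ nonzero diagonals, so a permutation matrix can hit at most $a$ of its ones: a $k\times k$ $0$--$1$ matrix supported on $a$ diagonals below the main diagonal has permanent-support at most $a$, since any placement of $k$ non-attacking rooks would need at least one on the main diagonal or above, which is forbidden). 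Therefore in any injection $\pi:\{1,\dots,ks\}\to\{1,\dots,km\}$, grouping contributions by the pair of blocks $(i,j)$ it hits, the total weight is at most $\sum_{(i,j)} m_{ij}$ where $m_{ij}\le a_{ij}$ is the number of matched pairs between block $i$ and block $j$, and $\sum_j m_{ij}\le k$, $\sum_i m_{ij}\le k$. Since all the ones have value $1$, $J(T_k(A)) = \max \sum_{i,j} m_{ij}$ over such doubly-substochastic integer matrices $(m_{ij})$ with $m_{ij}\le a_{ij}$; but $\sum_{i,j} m_{ij} = \sum_i (\sum_j m_{ij})$ and one checks that the optimum is achieved by concentrating, for each $i$, all of $\sum_j m_{ij}$ on a single column $\tau(i)$ with the $\tau(i)$ distinct — this is exactly an assignment problem whose value is $J(A)$. (That the concentration is optimal follows because the constraint $m_{ij}\le a_{ij}$ together with $\sum_j m_{ij}\le k \le$ any row sum of ones available, makes it a transportation LP whose vertices are integral and correspond to matchings of rows to columns.)

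The step I expect to be the main obstacle is making the reverse inequality fully rigorous without hand-waving, specifically the claim that the maximum-weight matching in $T_k(A)$ cannot do better than the block-diagonal strategy — i.e. that one gains nothing by spreading a row-block's matched entries across several column-blocks. The key technical fact needed is the bound ``a $k\times k$ $0$--$1$ matrix whose support lies in $a$ consecutive subdiagonals has maximum matching size $a$,'' which I would prove by a short König/Hall argument (the rows $1,\dots,k-a$ are entirely zero, so they are isolated on one side, leaving at most $a$ matchable rows). Granting that, the passage from the block structure to the assignment problem defining $J(A)$ is a standard totally-unimodular LP argument, but I would spell it out as: any integral feasible $(m_{ij})$ with $\sum_j m_{ij}\le k$, $\sum_i m_{ij}\le k$, $m_{ij}\le a_{ij}$, has $\sum m_{ij}$ bounded by the max-weight bipartite matching with edge weights $a_{ij}$ capped at the relevant sizes, which since each $a_{ij} \le k$ collapses to $J(A)$. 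Combining with the first paragraph gives $J(A) = J(T_k(A))$. $\square$
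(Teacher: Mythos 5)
Your first inequality, $J(A)\le J(T_k(A))$, is fine and is essentially the paper's argument: pick the $a_{i\tau(i)}$ ones on the uppermost nonzero diagonal of each block $T_{k,a_{i\tau(i)}}$ and extend to a full injection using zero entries. The problem is the reverse inequality, and the gap there is genuine. You reduce $J(T_k(A))$ to the integer program $\max\sum_{i,j}m_{ij}$ subject to $m_{ij}\le a_{ij}$, $\sum_j m_{ij}\le k$, $\sum_i m_{ij}\le k$, and claim its optimum is $J(A)$, attained by concentrating each row on one column. Both claims fail. Take $A$ the all-ones $2\times 2$ matrix and $k=2$: then $J(A)=J(T_2(A))=2$, but your program admits $m_{ij}=1$ for all $i,j$, with value $4$; moreover no feasible point concentrates row $1$ on a single column with value $2$, since $m_{1j}\le a_{1j}=1$. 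The defect is that the capacities $\sum_j m_{ij}\le k$ are far too weak: the ones of $T_{k,a_{ij}}$ sit only in the last $a_{ij}$ rows and first $a_{ij}$ columns of their block, so a matching can hit at most $\max_j a_{ij}$ ones in the whole $i$-th row-block, not $k$. Your per-block bound $m_{ij}\le a_{ij}$ is correct (and your K\"onig/Hall justification of it is fine), but the block-level bookkeeping you build on top of it discards exactly the positional information that makes the lemma true, and no totally-unimodular/transportation argument can recover it from those constraints alone.

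The paper closes this direction by duality rather than by counting matchings: by the K\"onig--Egerv\'ary theorem, $J(A)=\min\{\sum_i\lambda_i+\sum_j\phi_j\}$ over nonnegative integer covers with $\lambda_i+\phi_j\ge a_{ij}$; from an optimal cover $(\lambda,\phi)$ of $A$ one builds $0$--$1$ vectors $\Lambda\in\N_0^{km}$, $\Phi\in\N_0^{km}$ (putting the $\lambda_i$ ones at the bottom of the $i$-th row-block and the $\phi_j$ ones at the top of the $j$-th column-block) which cover every $1$ of $T_k(A)$ and have total weight $\sum\lambda_i+\sum\phi_j=J(A)$, whence $J(T_k(A))\le J(A)$. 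If you want to salvage your primal approach you would in effect have to reprove this duality; the shorter path is to invoke it directly as the paper does.
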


\begin{proof}
First, note that it suffices to prove the identity for square
matrices: in any case, one can make $A$ square by adding null rows,
which does not change its Jacobi number or the Jacobi number of its
expansion $T_k(A)$. So, we assume $A \in \N_0^{m\times m}$.

By definition, $J(A) = \max_{\tau \in S_m} \{ \sum_{1\le i \le m}
a_{i \tau(i)} \}$. Without loss of generality, we may assume that
the maximum is attained when $\tau$ is the identity. We consider a
permutation $\widehat \tau\in S_{km}$ which chooses a $km$-tuple
of entries of $T_k(A)$ consisting of the $a_{ii}$ $1$'s of the upper
nonzero diagonal of each block $T_{k, a_{ii}}$ in the main diagonal
of $T_k(A)$ along with some zero elements: for instance, the
permutation defined by $\widehat \tau((i-1)k+h) = (i-1)
k+r_k(h+a_{ii}-1)+1$ for every $1\le i \le m$, $1\le h\le m$, where
$r_k$ denotes remainder in the division by $k$. Then,
$\sum_{l=1}^{km} T_k(A)_{l \widehat\tau(l)} = \sum_{i=1}^m a_{ii}
=J(A)$. It follows that $J(A) \le \max_{\widehat \tau \in
S_{km}}\sum_{l=1}^{km} T_k(A)_{l \widehat\tau(l)} =J(T_k(A))$.

In order to prove the other inequality we will use the following
result known as the K\"onig-Egerv\'ary theorem (see \cite{egervary},
\cite{konig}): \begin{equation}\label{egervary}J(A) = \min \Big\{
\sum_{i=1}^m \lambda_i + \sum_{j=1}^m \phi_j \mid \lambda_i, \phi_j
\in \N_0,\, \lambda_i +\phi_j \ge a_{ij} \ \forall\, 1\le i, j \le
m\Big \}.\end{equation} Consider nonnegative integer vectors
$\lambda =(\lambda_1, \dots, \lambda_m)$ and $\phi =(\phi_1, \dots,
\phi_m)$ such that $\lambda_i + \phi_j \ge a_{ij}$ for every $1\le
i, j\le m$ and $J(A) = \sum_{i=1}^m \lambda_i + \sum_{j=1}^m
\phi_j$.

We define nonnegative integer vectors $\Lambda= (\Lambda_1,\dots,
\Lambda_m), \Phi= (\Phi_1, \dots, \Phi_m)\in \N_0^{km}$ with
$\Lambda_i , \Phi_j \in \N_0^k$ as follows: for every $1\le i \le m$
and $1\le j \le m$,
$$ \Lambda_{ih} = \begin{cases} 0 & \hbox{ for } h=1,\dots, k-\lambda_i\\
1 & \hbox{ for } h=k-\lambda_i+1,\dots, k
\end{cases} \quad \hbox{ and } \quad
\Phi_{jl} = \begin{cases} 1 & \hbox{ for } l=1,\dots, \phi_j\\
0 & \hbox{ for } l=\phi_j+1,\dots, k.
\end{cases}
$$
Let us show that $\Lambda, \Phi$ satisfy $\Lambda_{ih} + \Phi_{jl}
\ge T_k(A)_{(i-1)k +h, (j-1)k+l}$, or equivalently,

\begin{equation} \label{tata}
\Lambda_{ih}+ \Phi_{jl}\ge (T_{k, a_{ij}})_{hl}
\end{equation}
for every $1\le i, j \le m$ and $ 1\le h, l\le k$.

Recall that $(T_{k, a_{ij}})_{hl}$ is nonzero only for those $h, l$
with $k-a_{ij} \le h-l \le k-1$ and, in these cases, it equals $1$.

If $h\ge k-\lambda_i+1$ or $l\le \phi_j$, we have $\Lambda_{ih}+
\Phi_{jl}\ge 1$, since at least one of the terms equals $1$ and so,
inequality (\ref{tata}) holds in this case. Now, if $h\le
k-\lambda_i$ and $l\ge \phi_j -1$, both $\Lambda_{ih}$ and
$\Phi_{jl}$ are zero, but $h-l \le k-(\lambda_i +\phi_j)-1 \le
k-a_{ij} -1$, which implies that $(T_{k, a_{ij}})_{hl} = 0$.

Finally, since
$$\sum_{i=1}^m \sum_{h=1}^k \Lambda_{ih} +
\sum_{j=1}^m \sum_{h=1}^k  \Phi_{jl} = \sum_{i=1}^m \lambda_i +
\sum_{j=1}^m \phi_j = J(A),$$ applying identity (\ref{egervary}) to
the matrix $T_k(A)$, we conclude that $J(T_k(A)) \le J(A)$.
\end{proof}

\bigskip

We are now ready to prove the main result of this section.

\bigskip

\begin{proof}
\textit{(Proof of Theorem \ref{indexbound}.)} We will show that
\begin{equation}\label{cota mus}
 er - J(\mathcal{E}_0) \le \mu_k\le er - {\rm ord}(\fP) \qquad \forall\, k\ge
e - \min\{\epsilon_{ij}\},
\end{equation}
where $e=\max\{\epsilon_{ij}\}$.

These inequalities imply that the sequence $(\mu_k)_{k\ge e-\min\{\epsilon_{ij}\}}$
increases at most $(er-{\rm ord}(\fP))-(er-J(\mathcal{E}_0))=
J(\mathcal{E}_0)-{\rm ord}(\fP)$ times and hence the complete sequence
$(\mu_k)_{k\in\mathbb{N}_0}$ increases at most
$e-\min\{\epsilon_{ij}\}+ J(\mathcal{E}_0)-{\rm ord}(\fP)$ times.
By Theorem \ref{definicion de indice} this number bounds
the differentiation index $\sigma$ and Theorem \ref{indexbound} follows.

%Fix $k\ge e- \min\{\epsilon_{ij}\}$. By Propositions \ref{equival}
%and \ref{mu no depende de i}, we have that $\textrm{trdeg}_K
%\textrm{Frac}(B_{e-1}/(\Delta_k \cap B_{e-1})) = en - \mu_k$. Now,
%since $\Delta_k \cap B_{e-1} \subset \Delta \cap B_{e-1}$ and the
%Hilbert-Kolchin regularity of $\fP$ is at most $e-1$ (see Theorem
%\ref{hilbert constante}), we have that
%$\textrm{trdeg}_K\textrm{Frac}(B_{e-1}/(\Delta_k \cap B_{e-1}))\ge
%\textrm{trdeg}_K\textrm{Frac}(B_{e-1}/(\Delta \cap B_{e-1})) =(n-r)e
%+{\rm ord}(\fP)$, and so, $\mu_k \le er$.

Then, it suffices to prove the inequality (\ref{cota mus}).

First, note that by Theorem \ref{definicion de indice} and
Proposition \ref{ordenmu}, the inequality $\mu_k \le \mu_\sigma = er
- {\rm ord}(\fP)$ holds for every $k\in \N_0$, which proves the
second inequality.

Now, fix $k\ge e- \min\{\epsilon_{ij}\}$. By Defintion \ref{emes},
$\mu_k = kr - \textrm{rank}(\mathfrak{J}_{k,e-1})$. In
order to simplify notation, we will write
$\fJ_k:=\mathfrak{J}_{k,e-1}$. Let us consider the matrix
$\widetilde \fJ_k$ which is obtained by permutation of rows and
columns of $\fJ_k$ so that:
$$\widetilde\fJ_k = \begin{pmatrix}
\frac{\partial f_1^{[k-1]}}{\partial X_1^{[e, e+k-1]}} &
\frac{\partial f_1^{[k-1]}}{\partial X_2^{[e, e+k-1]}} & \cdots &
\frac{\partial f_1^{[k-1]}}{\partial X_n^{[e, e+k-1]}}\\
\frac{\partial f_2^{[k-1]}}{\partial X_1^{[e, e+k-1]}} &
\frac{\partial f_2^{[k-1]}}{\partial X_2^{[e, e+k-1]}} & \cdots &
\frac{\partial f_2^{[k-1]}}{\partial X_n^{[e, e+k-1]}}\\
\vdots & \vdots& & \vdots \\
\frac{\partial f_r^{[k-1]}}{\partial X_1^{[e, e+k-1]}} &
\frac{\partial f_r^{[k-1]}}{\partial X_2^{[e, e+k-1]}} & \cdots &
\frac{\partial f_r^{[k-1]}}{\partial X_n^{[e, e+k-1]}}
\end{pmatrix},$$
where for every $1\le i \le r$ and $1\le j \le n$,
\begin{equation} \label{block}
\frac{\partial f_i^{[k-1]}}{\partial X_j^{[e,\, e+k-1]}}=
\left(
\begin{array}{cccc}
\frac{\partial f_i}{\partial X_j^{(e)}} & 0  & \cdots & 0
\\[3mm]
\frac{\partial \dot f_i}{\partial X_j^{(e)}} & \frac{\partial \dot
f_i}{\partial X_j^{(e+1)}} &
\cdots  & 0 \\[1.5mm]
\vdots & \vdots   & \ddots  &  \vdots \\[1.5mm]
\frac{\partial f_i^{(k-1)}}{\partial X_j^{(e)}}  & \frac{
\partial f_i^{(k-1)}}{\partial X_j^{(e+1)}}  &
\cdots & \frac{\partial f_i^{(k-1)}}{\partial X_j^{(e+k-1)}}
\end{array}
\right)\in \mathbb{K}^{k\times k}.
\end{equation}

Applying Lemma \ref{jacobi binary}, we have that
$\textrm{rank}(\fJ_k) = \textrm{rank}(\widetilde \fJ_k) \le
J(B(\widetilde \fJ_k))$. We are now going to estimate the Jacobi
number of $B(\widetilde \fJ_k)$.

As ${\rm ord}_{X_j}(f_i) = \epsilon_{ij}$, the partial derivative
$\frac{\partial f_i^{(p)}}{\partial X_j^{(q)}}$ is identically zero
for all $p, q$ with $q-p > \epsilon_{ij}$. Thus, the only nonzero
entries of the block $\frac{\partial f_i^{[k-1]}}{\partial
X_j^{[e,\, e+k-1]}}$ or, equivalently, all the $1$'s of the
corresponding block in the binary matrix $B(\widetilde \fJ_k)$, lie
in places $(h,l)$ with $h-l\ge e-\epsilon_{ij}$.

Taking into account that the Jacobi number of a matrix does not
decrease by replacing some of its zero entries by positive integers,
we deduce that the Jacobi number of $B(\widetilde \fJ_k)$ is bounded
above by the Jacobi number of the block binary matrix which is
obtained by setting to $1$ each entry of $B(\widetilde \fJ_k)$
corresponding to all these partial derivatives that could be
nonzero:

\begin{equation} \label{triangblock}
\begin{pmatrix} T_{k, k-e+\epsilon_{11}} & T_{k, k-e+\epsilon_{12}} & \dots & T_{k,
k-e+\epsilon_{1n}} \\
T_{k, k-e+\epsilon_{21}} & T_{k, k-e+\epsilon_{22}} & \dots & T_{k,
k-e+\epsilon_{2n}} \\
\vdots & & \vdots\\
T_{k, k-e+\epsilon_{r1}} & T_{k, k-e+\epsilon_{r2}} &\dots & T_{k,
k-e+\epsilon_{rn}}
\end{pmatrix}.
\end{equation}

Note that the above matrix is the binary matrix $T(\mathcal{S}_k)$
associated with the integer matrix
$$\mathcal{S}_k := \begin{pmatrix}
k-e+\epsilon_{11} & k-e+\epsilon_{12}&\dots & k-e+\epsilon_{1n}\\
k-e+\epsilon_{21} & k-e+\epsilon_{22}&\dots & k-e+\epsilon_{2n}\\
\vdots & \vdots & & \vdots \\
k-e+\epsilon_{r1} &
k-e+\epsilon_{r2}&\dots & k-e+\epsilon_{rn}
\end{pmatrix}.$$
Therefore, due to Lemma \ref{jacobi triangular}, $J(B(\widetilde
\fJ_k))\le J(T(\mathcal{S}_k))= J(\mathcal{S}_k) = \max_\tau \{
\sum_{i=1}^r k-e+\epsilon_{i \tau(i)} \} = (k-e)r + \max_\tau \{
\sum_{i=1}^r \epsilon_{i \tau(i)} \} = (k-e) r + J(\mathcal{E}_0)$.

We conclude that $\textrm{rank}(\fJ_k) \le (k-e) r +J(\mathcal{E}_0)$
and so, $\mu_k = kr -\textrm{rank}(\fJ_k) \ge er - J(\mathcal{E}_0)$.
\end{proof}

\bigskip

\noindent \textbf{Remark on the proof\ }
Suppose that the variable $X_j$ does not occur in the equation $f_i$;
then the matrix (\ref{block}) is zero. Hence, the corresponding
triangular block $T_{k,k-e+\epsilon_{ij}}$ in (\ref{triangblock}) may be
taken as the zero matrix and we can replace the integer
$k-e+\epsilon_{ij}$ by $0$ in the matrix $\mathcal{S}_k$,
always preserving the inequality $\textrm{rank}(\fJ_k)
\le J(T(\mathcal{S}_k))= J(\mathcal{S}_k)$ for the new matrix
$\mathcal{S}_k$. Therefore, if we redefine $\epsilon_{ij}:=-\infty$
for those pairs $(i,j)$ such that $X_j$ does not appear in $f_i$, we
conclude that the inequality
\begin{equation} \label{infinite}
\textrm{rank}(\fJ_k)\le \max_\tau \{
\sum_{i=1}^r \max\{0,k-e+\epsilon_{i \tau(i)}\} \}.
\end{equation}
holds for any integer $k$ verifying $k\ge e-\min\{\epsilon_{ij}\}$. \\

This remark is a key fact we use to obtain the Jacobi bound for the order
of the prime ideal $\fP$:

\begin{theorem} \label{cotaorden} Let $\fP\subset K\{X_1\dots, X_n\}$ be a minimal prime
differential ideal containing polynomials  $f_1,\dots, f_r\in
K\{X_1\dots, X_n\}$. Assume that the DAE system defined by
$f_1,\dots, f_r$ is quasi-regular at $\fP$ and verifies the hypothesis
of Subsection \ref{seccionjacobiana}. Let $\mathcal{E}:=
(\epsilon_{ij})_{1\le i\le r,1\le j \le n}$, where
$\epsilon_{ij}:={\rm ord}_{X_j}(f_i)$
and $-\infty$ if the variable $X_j$ does not appear in $f_i$.
Then, ${\rm ord}(\fP) \le J(\mathcal{E})$.
\end{theorem}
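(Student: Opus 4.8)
The plan is to rerun the argument from the proof of Theorem~\ref{indexbound}, but feeding in the sharper rank bound~(\ref{infinite}) from the Remark following that proof, which exploits the $-\infty$ entries of $\mathcal{E}$ (the variables absent from each equation). First I would reduce the statement to one about the sequence $(\mu_k)$: by Proposition~\ref{ordenmu} one has ${\rm ord}(\fP) = er - \mu_\sigma$, and by Theorem~\ref{definicion de indice} the sequence $\mu_k$ is eventually constant, equal to $\mu_\sigma$ for $k \ge \sigma$. Thus it suffices to exhibit one index $k \ge \sigma$ with $\mu_k \ge er - J(\mathcal{E})$, equivalently with ${\rm rank}(\fJ_k) \le (k-e)r + J(\mathcal{E})$, since $\mu_k = kr - {\rm rank}(\fJ_k)$ by Definition~\ref{emes} (here $\fJ_k := \fJ_{k,e-1}$, as in the proof of Theorem~\ref{indexbound}); indeed then $\mu_\sigma = \mu_k \ge er - J(\mathcal{E})$ gives ${\rm ord}(\fP) = er - \mu_\sigma \le J(\mathcal{E})$.

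Before estimating ranks I would note that $J(\mathcal{E})$ must be finite, i.e. some injection $\tau$ meets only finite entries of $\mathcal{E}$: otherwise, setting $\rho := \max_\tau |\{i : \epsilon_{i\tau(i)} > -\infty\}| < r$ and using $\epsilon_{ij} \le e$, inequality~(\ref{infinite}) gives ${\rm rank}(\fJ_k) \le \rho k$ for all large $k$, hence $\mu_k \ge (r-\rho)k \to \infty$, contradicting that $(\mu_k)$ stabilizes. The core of the proof is then a short combinatorial estimate of the right-hand side of~(\ref{infinite}) for $k$ large. For a fixed injection $\tau$, put $S(\tau) := \{i : \epsilon_{i\tau(i)} > -\infty\}$; once $k \ge e$, each summand with $i \in S(\tau)$ equals $k - e + \epsilon_{i\tau(i)} \ge 0$ and each summand with $i \notin S(\tau)$ vanishes, so the $\tau$-sum equals $|S(\tau)|(k-e) + \sum_{i \in S(\tau)} \epsilon_{i\tau(i)}$. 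If $|S(\tau)| = r$ this is at most $(k-e)r + J(\mathcal{E})$ by definition of the Jacobi number; if $|S(\tau)| < r$ it is at most $|S(\tau)|\,k \le (r-1)k$, which is $\le (k-e)r + J(\mathcal{E})$ as soon as $k \ge er - J(\mathcal{E})$. Hence for every sufficiently large $k$ (concretely $k \ge \max\{\sigma,\, e,\, er - J(\mathcal{E}),\, e - \min\{\epsilon_{ij}\}\}$, the last entry being the range where~(\ref{infinite}) applies) we obtain ${\rm rank}(\fJ_k) \le (k-e)r + J(\mathcal{E})$, and the reduction of the first paragraph yields ${\rm ord}(\fP) \le J(\mathcal{E})$.

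The main obstacle is not conceptual but a matter of care with thresholds: the estimate only works once $k$ is large enough that the absent variables cannot be "used" advantageously by an injection, and this is precisely why one must first rule out $J(\mathcal{E}) = -\infty$ (so that $er - J(\mathcal{E})$ is a finite cutoff) and check that this cutoff is compatible with the range $k \ge e - \min\{\epsilon_{ij}\}$ in which the refined bound~(\ref{infinite}) is available. All the genuinely new input is already contained in the Remark following the proof of Theorem~\ref{indexbound}, which rests on Lemmas~\ref{jacobi binary} and~\ref{jacobi triangular}; the present proof merely pushes $k \to \infty$ and invokes Proposition~\ref{ordenmu} together with the stabilization of $(\mu_k)$.
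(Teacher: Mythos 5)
Your proof is correct and follows essentially the same route as the paper: reduce via Proposition \ref{ordenmu} and the stabilization of $(\mu_k)$ to the rank bound ${\rm rank}(\fJ_k)\le (k-e)r+J(\mathcal{E})$ for one large $k$, then establish it from inequality (\ref{infinite}) by splitting injections $\tau$ according to whether they meet only finite entries of $\mathcal{E}$ (your threshold $k\ge er-J(\mathcal{E})$ versus the paper's $k\ge er$ is an immaterial difference). The one point where you genuinely diverge is the fact that $J(\mathcal{E})\ne -\infty$: the paper cites quasi-regularity via \cite{konmipan}, whereas your derivation of it from (\ref{infinite}) and the boundedness of $(\mu_k)$ is a correct, self-contained alternative.
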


\begin{proof}
It suffices to show that
the inequality (\ref{cota mus}) holds for some index $k$ after
replacing
the matrix $\mathcal{E}_0$ by $\mathcal{E}$. Since the inequality
$\mu_k\le er- {\rm ord}(\fP)$ is independent of the matrix $\mathcal{E}$ it
remains to prove that there exists an integer $k$ such that
$er -J(\mathcal{E})\le \mu_k$ holds. From the definition of $\mu_k$
this inequality is equivalent to
$\textrm{rank}(\fJ_k) \le (k-e) r +J(\mathcal{E})$.

From (\ref{infinite}) in the previous remark it is enough to prove that
\[
\max_\tau \{
\sum_{i=1}^r \max\{0,k-e+\epsilon_{i \tau(i)}\} \}\le (k-e) r +J(\mathcal{E})
\]
holds for some $k\ge e-\min\{\epsilon_{ij}\}$.

Take any $k\ge er$ and let $\tau :\{1,\ldots,r\}\rightarrow \{1,\ldots ,n\}$
injective.

First, suppose that $\epsilon_{i\tau(i)}\ne -\infty$ for all $i=1,\ldots ,r$. Then
$\max\{0,k-e+\epsilon_{i \tau(i)}\}=k-e+\epsilon_{i \tau(i)}$ for all $i$. Hence
$\sum_{i=1}^r \max\{0,k-e+\epsilon_{i \tau(i)}\}=(k-e)r+ \sum_{i=1}^r \epsilon_{i \tau(i)}$,
which is clearly bounded by $(k-e) r +J(\mathcal{E})$.

Now, suppose that $\epsilon_{i_0\tau(i_0)}= -\infty$ for some index $i_0$.
Then we have
\begin{equation} \label{tau}
\sum_{i=1}^r \max\{0,k-e+\epsilon_{i \tau(i)}\}=\sum_{i\ne i_0} \max\{0,k-e+\epsilon_{i \tau(i)}\}
\le k(r-1),
\end{equation}
because each $\max\{0,k-e+\epsilon_{i \tau(i)}\}$ is bounded by $k$.
The quasi-regularity of the system ensures that $J(\mathcal{E})\ne -\infty$ (see for instance \cite{konmipan});
then there exists an injection $\nu: \{1,\ldots,r\}\rightarrow \{1,\ldots ,n\}$
such that $\epsilon_{i,\nu(i)}\ne -\infty$ holds for all $i=1,\ldots,r$. Then
\begin{equation} \label{nu}
(k-e)r\le \sum_{i=1}^r \max\{0,k-e+\epsilon_{i \nu(i)}\}\le (k-e)r+J(\mathcal{E})
\end{equation}
holds. Since $k\ge er$, we have $k(r-1)\le (k-e)r $. Hence, from (\ref{tau}) and (\ref{nu}) we have that
$
\sum_{i=1}^r \max\{0,k-e+\epsilon_{i \tau(i)}\}\le k(r-1)\le (k-e)r\le (k-e)r+J(\mathcal{E})
$. The theorem follows.
\end{proof}\\

The following two classical examples of DAE systems show that the
bounds in Theorems \ref{indexbound} and \ref{cotaorden} may be attained.

\vskip8pt

\noindent \textbf{Example 1.\ } Let us consider a system in
Hessenberg form of size $n$ (see, for instance, \cite[Definition
2.5.3]{brenan}):

\begin{equation}\label{hessenberg}
({\Sigma})\ :=\ \left\{
\begin{array}
[c]{ccl}%
\dot{X}_1&=&f_1(X_1, \ldots , X_n) \\
%\dot{X}_2&=&f_2(X_1, \ldots , X_{n-1})\\
&\vdots & \\ \dot{X}_i&=&f_i(X_{i-1},X_i, \ldots , X_{n-1})\ \ \  2\le i\le n-1\\ &\vdots& \\
0&=&f_n(X_{n-1})
\end{array}
\right. \end{equation}
 such that  $\left(\frac{\partial
f_n}{\partial X_{n-1}}\right)\cdot \left(\frac{\partial
f_{n-1}}{\partial X_{n-2}}\right)\cdots \left(\frac{\partial
f_2}{\partial X_{1}}\right)\cdot \left(\frac{\partial f_1}{\partial
X_{n}}\right)\ne 0$ in $\mathbb{K}$. Let us assume that the differential ideal
$\fP:=[\dot{X}_1-f_1, \ldots , \dot{X}_{n-1}-f_{n-1}, f_n]$ is
prime.

\medskip The matrix $\mathcal{E}_0$ associated with this system is
\[\mathcal{E}_0= \left(\begin{array}{ccccc}1&0&\ldots&0&0\\0&1&\ldots
&0&0\\\vdots
&\vdots&\ddots &\vdots &\vdots\\ 0&0&\ldots & 1&0\\
0&0&\ldots &0&0\end{array}\right)\] and we have the following upper
bound according to Theorem \ref{indexbound}:
\begin{equation}\label{cotahess} \sigma + {\rm ord}(\fP) \le
J(\mathcal{E}_0)+\max\{\epsilon_{ij}\} -
\min\{\epsilon_{ij}\}=n-1+1-0=n.
\end{equation}

In order to compute the differentiation index of this system,
consider the matrices
\[A:=\left(\begin{array}{ccccc}
$1$ & $0$ & \ldots &0& $0$\\$0$ & $1$ & \ldots &0&
$0$\\&\vdots&\ddots&\vdots&\\$0$ & $0$ & \ldots &1& $0$\\$0$ & $0$ &
\ldots &0& $0$\end{array}\right) \] and \[B_{\alpha,
\beta}:=\left(\begin{array}{ccccc} -\frac{\partial
f_1^{(\alpha)}}{\partial X_1^{(\beta )}} & -\frac{\partial
f_1^{(\alpha)}}{\partial X_2^{(\beta)}} & \ldots & -\frac{\partial
f_1^{(\alpha)}}{\partial X_{n-1}^{(\beta)}}&
-\frac{\partial f_1^{(\alpha)}}{\partial X_n^{(\beta )}}\\
-\frac{\partial f_2^{(\alpha)}}{\partial X_1^{(\beta )}} &
-\frac{\partial f_2^{(\alpha)}}{\partial X_2^{(\beta )}} & \ldots
&-\frac{\partial f_2^{(\alpha)}}{\partial X_{n-1}^{(\beta )}}& 0\\0
& -\frac{\partial f_3^{(\alpha)}}{\partial X_2^{(\beta )}} & \ldots
& -\frac{\partial f_3^{(\alpha)}}{\partial
X_{n-1}^{(\beta )}}&0\\
&\vdots&\ddots&\vdots&\\0 & 0 & \ldots & -\frac{\partial
f_n^{(\alpha)}}{\partial X_{n-1}^{(\beta )}}& 0\end{array}\right)
{\rm{for}} \ \alpha, \beta \ge 1.\]

Then, the corresponding matrices $\mathfrak{J}_{k,0}$ (see
Definition \ref{defimatricespi}) are

\[\left(
\begin{tabular}
[c]{ccccc}%
$A$ & \multicolumn{1}{|c}{$0$} & \multicolumn{1}{|c}{$0$}
&\multicolumn{1}{|c}{$\ldots$} & \multicolumn{1}{|c}{$0$}
\\\cline{1-1}\cline{1-1}
\\[-8mm]
&& \multicolumn{1}{|c}{} & \multicolumn{1}{|c}{}&
\multicolumn{1}{|c}{}
 \\
$B_{1, 1}$ & $A$ & \multicolumn{1}{|c}{$0$}
&\multicolumn{1}{|c}{$\ldots$} &
\multicolumn{1}{|c}{$0$} \\
\cline{1-2}\cline{2-2}
\\[-8mm]
& &  & \multicolumn{1}{|c}{}& \multicolumn{1}{|c}{}
 \\ %
$B_{2, 1}$ & $B_{2, 2}$ &$A$ &\multicolumn{1}{|c}{$\ldots$} &
\multicolumn{1}{|c}{$0$} \\
\cline{1-3}\cline{3-3}
\\[-8mm]
& &  & & \multicolumn{1}{|c}{}
 \\ %
\vdots & \vdots & \vdots &$\ddots$ &
\multicolumn{1}{|c}{$\vdots$} \\
\cline{1-4}\cline{4-4}
\\[-8mm]
& &  & &
 \\ %
$B_{(k-1), 1}$& $B_{(k-1), 2}$ & $B_{(k-1), 3}$ &$\ldots$ &$A$
\\
\end{tabular}
\right).
\]

It can be shown that the dimension  of the corresponding kernels of
$\mathfrak{J}^t_{k,0}$ are $\mu_k=k$, for $k=0,\ldots , n$.
Therefore, the differentiation index of the system $(\Sigma)$ is
$\sigma=n$ and the order of the differential ideal $\fP$ is ${\rm
ord}(\fP) = 1\cdot n - \mu_n = n-n = 0$. Thus, the upper bound
(\ref{cotahess}) is attained.

In order to verify the upper bound stated in Theorem \ref{cotaorden}
we consider the matrix

\[\mathcal{E}= \left(\begin{array}{ccccccc}1&\star&\star&\ldots&\star&\star&0\\0&1&\star&\ldots
&\star&\star&-\infty\\-\infty&0&1&\ldots &\star&\star&-\infty
\\\vdots &\vdots
&\vdots&\ddots &\vdots &\vdots &\vdots\\-\infty& -\infty&-\infty&\ldots & 1&\star&-\infty\\-\infty& -\infty&-\infty&\ldots & 0&1&-\infty\\
-\infty&-\infty&-\infty&\ldots &-\infty&0&-\infty\end{array}\right)\]
where each $\star$ stands for  $0$ or $-\infty$.

It is easy to see that the Jacobi number of this matrix is $J(\mathcal{E})=0$, which
agrees with the order of $\fP$.

\bigskip

\noindent \textbf{Example 2.\ } Consider the DAE system arising from
a variational problem describing the motion of a pendulum of length
$L$. If $g$ is the gravitational constant and $\lambda$ the force in
the bar:
\begin{equation}\label{pendulo}
({\Sigma})\ :=\ \left\{
\begin{array}
[c]{ccl}%
X_1^{(2)}-\lambda X_1&=& 0 \\
X_2^{(2)}-\lambda X_2+g&=& 0\\
X_1^2+X_2^2-L^2 &=& 0 \\
\end{array}
\right. .
\end{equation}

Let $\fP:=[X_1^{(2)}-\lambda X_1,  X_2^{(2)}-\lambda
X_2+g,X_1^2+X_2^2-L^2] \subset \mathbb{R}\{X_1, X_2, \lambda\}$.

The matrix $\mathcal{E}_0$ associated to the system is
$\left(\begin{matrix}2&0&0\\0&2&0\\0&0&0\end{matrix}\right)$ and the
bound given by Theorem \ref{indexbound} is $\sigma + {\rm
ord}(\fP)\le J(\mathcal{E}_0)+\max\{\epsilon_{ij}\} -
\min\{\epsilon_{ij}\}=4+2-0=6$.

The corresponding matrices $\mathfrak{J}_{k,1}$,
$k=1,\dots, n$ (see Definition \ref{defimatricespi}) are

{\small
\[\left(
\begin{tabular}
[c]{ccccccccccccccc}%
$1$ & $0$ & $0$ & \multicolumn{1}{|c}{$0$} & $0$ & $0$ &
\multicolumn{1}{|c}{$0$} & $0$ & $0$ & \multicolumn{1}{|c}{$0$} &
$0$ & $0$ & \multicolumn{1}{|c}{$0$} & $0$ & $0$\\ $0$ & $1$ & $0$ &
\multicolumn{1}{|c}{$0$} & $0$ & $0$ & \multicolumn{1}{|c}{$0$} &
$0$ & $0$ & \multicolumn{1}{|c}{$0$} & $0$ & $0$ &
\multicolumn{1}{|c}{$0$} & $0$ & $0$\\ $0$ & $0$ & $0$ &
\multicolumn{1}{|c}{$0$} & $0$ & $0$ & \multicolumn{1}{|c}{$0$} &
$0$ & $0$ & \multicolumn{1}{|c}{$0$} & $0$ & $0$ &
\multicolumn{1}{|c}{$0$} & $0$ & $0$\\ \cline{1-3}\cline{3-3}
\\[-8mm]
&&&&&&\multicolumn{1}{|c}{} &&& \multicolumn{1}{|c}{} &&& \multicolumn{1}{|c}{} \\ %
$0$ & $0$ & $0$ & $1$ & $0$ & $0$ & \multicolumn{1}{|c}{$0$} & $0$ &
$0$ & \multicolumn{1}{|c}{$0$} & $0$ & $0$ &
\multicolumn{1}{|c}{$0$} & $0$ & $0$\\ $0$ & $0$ & $0$ & $0$ & $1$ &
$0$ & \multicolumn{1}{|c}{$0$} & $0$ & $0$ &
\multicolumn{1}{|c}{$0$} & $0$ & $0$ & \multicolumn{1}{|c}{$0$} & $0$ & $0$\\
$0$ & $0$ & $0$ & $0$ & $0$ & $0$ & \multicolumn{1}{|c}{$0$} & $0$ &
$0$ & \multicolumn{1}{|c}{$0$} & $0$ & $0$ &
\multicolumn{1}{|c}{$0$} & $0$ & $0$\\\cline{1-6} %
\\[-8mm]
&&&&&& &&& \multicolumn{1}{|c}{} &&& \multicolumn{1}{|c}{} \\ %
$-\lambda$ & $0$ & $-X_{1}$ & $0$ & $0$ & $0$ & $1$ & $0$ & $0$ &
\multicolumn{1}{|c}{$0$} & $0$ & $0$ & \multicolumn{1}{|c}{$0$} & $0$ & $0$\\
$0$ & $-\lambda$ & $-X_{2}$ & $0$ & $0$ & $0$ & $0$ & $1$ & $0$ &
\multicolumn{1}{|c}{$0$} & $0$ & $0$ & \multicolumn{1}{|c}{$0$} & $0$ & $0$\\
$2X_{1}$ & $2X_{2}$ & $0$ & $0$ & $0$ & $0$ & $0$ & $0$ & $0$ &
\multicolumn{1}{|c}{$0$} & $0$ & $0$ & \multicolumn{1}{|c}{$0$} &
$0$ &
$0$\\\cline{1-9}%
\\[-8mm]
&&&&&& &&&  &&& \multicolumn{1}{|c}{} \\ %
$-3\dot{\lambda}$ & $0$ & $-3\dot{X}_{1}$ & $-\lambda$ & $0$ &
$-X_{1}$ & $0$ &
$0$ & $0$ & $1$ & $0$ & $0$ & \multicolumn{1}{|c}{$0$} & $0$ & $0$\\
$0$ & $-3\dot{\lambda}$ & $-3\dot{X}_{2}$ & $0$ & $-\lambda$ &
$-X_{2}$ & $0$ &
$0$ & $0$ & $0$ & $1$ & $0$ & \multicolumn{1}{|c}{$0$} & $0$ & $0$\\
$6\dot{X}_{1}$ & $6\dot{X}_{2}$ & $0$ & $2X_{1}$ & $2X_{2}$ & $0$ &
$0$ & $0$ & $0$ & $0$ & $0$ & $0$ & \multicolumn{1}{|c}{$0$} & $0$ &
$0$\\\cline{1-12}%
\\[-8mm]
&&&&&& &&&  &&& \\ %
$-6\lambda^{(2)}$ & $0$ & $-6X_{1}^{(2)}$ & $-4\dot{\lambda}$ & $0$
& $-4\dot{X}_{1}$ & $-\lambda$ & $0$ & $-X_{1}$ & $0$ & $0$ & $0$ &
$1$ & $0$ &
$0$\\
$0$ & $-6\lambda^{(2)}$ & $-6X_{2}^{(2)}$ & $0$ & $-4\dot{\lambda}$
& $-4\dot{X}_{2}$ & $0$ & $-\lambda$ & $-X_{2}$ & $0$ & $0$ & $0$ &
$0$ & $1$ &
$0$\\
$12X_{1}^{(2)}$ & $12X_{2}^{(2)}$ & $0$ & $8\dot{X}_{1}$ &
$8\dot{X}_{2}$ & $0$ & $2X_{1}$ & $2X_{2}$ & $0$ & $0$ & $0$ & $0$ & $0$ & $0$ & $0$%
\end{tabular}
\right).
\]
}

The dimension  of the corresponding kernels of
$\mathfrak{J}^t_{k,1}$ are $\mu_k=k$, for $k=0,\ldots ,4$, and
$\mu_5=4$. Then, the differentiation index of the system $(\Sigma)$
is $\sigma=4$. We remark that the pendulum system is
customarily considered as a system of differentiation index $3$, but
in fact this is the value of the differentiation index of its
first-order equivalent system; as we observed in \cite[Subsection
5.1]{DJS07} differential changes of coordinates may alter the
differentiation index.

On the other hand, the order of the
associated differential ideal is ${\rm ord}(\fP) = er - \mu_\sigma =
2\cdot 3-4=2$ (see Proposition \ref{ordenmu}).

Therefore, we have that $\sigma +{\rm ord}(\fP)  = 4+2 =6$, which
coincides with our stated upper bound.

Finally we have $\mathcal{E}=\left(\begin{matrix}2&-\infty
&0\\-\infty &2&0\\0&0&-\infty
\end{matrix}\right)$. The Jacobi number of this matrix equals $2$ and thus, the Jacobi
bound in Theorem \ref{cotaorden} coincides with the order of the ideal.

\section{The membership problem for quasi-regular differential
ideals} \label{membership}

Roughly speaking, the \emph{ideal membership problem} in an
arbitrary commutative ring $A$ consists in deciding if a given
element $f\in A$ belongs to a fixed ideal $I\subset A$, and, in the
affirmative case, representing $f$ as a polynomial linear
combination of a given set of generators of $I$.

Since the work of G. Hermann \cite{herrmann}, it is well known that
the membership problem is \emph{decidable} for polynomial rings in
finitely many variables. On the contrary, this is not the case in
the differential context, where the problem is \emph{undecidable}
for arbitrary ideals (see \cite{gallo}) and remains still an open
question for finitely generated ideals. However, there are special
classes of differential ideals for which the problem is decidable,
in particular the class of differential radical ideals
(\cite{Seidenberg}, see also \cite{Boulier}).

Concerning the representation problem, besides the non
noetherianity, the differential case involves another additional
ingredient: the order $N$ of derivation of the given generators of
$I$ needed to write an element $f\in I$ as a polynomial linear
combination of the generators and their first $N$ total derivatives.
The known order bounds seem to be too big, even for radical ideals
(see for instance \cite{golubitsky}, where an upper bound in terms
of the Ackerman function is given). Obviously, once the order is
bounded, the problem becomes a purely algebraic representation
problem instance in a suitable polynomial ring in finitely many
variables.

By means of Theorem \ref{sadik} above, we are able to give
\emph{efficient} order bounds for the membership problem in the
quasi-regular differential setting which lead to estimations for the
degrees and number of variables involved in the representation.
The quasi-regularity condition also ensures that the
(algebraic) ideals involved are prime and generated by regular
sequences, which allows a significant improvement in the degree
bounds for the representation with respect to more general
situations.\\

We will use the following membership theorem for polynomial rings:

\begin{theorem} \label{fitchas}
(\cite[Theorem 5.1]{dickenstein})
Let $k$ be a field and $g_1,\ldots, g_s\in k[Z_1,\ldots,Z_n]$ be a complete intersection of polynomials
whose total degrees are bounded by an integer $d$.
Let $g\in k[Z_1,\ldots,Z_n]$ be another polynomial.
Then the following conditions are equivalent:
\begin{enumerate}
\item $g$ belongs to the ideal generated by $g_1,\ldots,g_s$;
\item there exist polynomials $a_1,\ldots,a_s$ such that $g=\sum a_jg_j$ and
$\deg(a_jg_j)\le d^s+\deg(g)$ for $1\le j\le s$. $\blacksquare$
\end{enumerate}
\end{theorem}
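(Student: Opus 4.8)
\textbf{Proof proposal for Theorem \ref{fitchas}.}

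The statement to be proved is the membership theorem for polynomial rings cited as \cite[Theorem 5.1]{dickenstein}: if $g_1,\ldots,g_s$ is a complete intersection with $\deg g_j\le d$ and $g\in(g_1,\ldots,g_s)$, then one can write $g=\sum a_jg_j$ with $\deg(a_jg_j)\le d^s+\deg(g)$. The plan is to reduce the degree bound to a statement about the Koszul complex of the regular sequence $g_1,\ldots,g_s$, which is exact in positive homological degrees precisely because the sequence is regular (a complete intersection). First I would homogenize: introduce a new variable $Z_0$ and replace each $g_j$ by its homogenization $g_j^h$ of degree $d$ (padding with $Z_0$-powers so all have the same degree $d$), and $g$ by $g^h$ of degree $\deg(g)$. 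One must check that the homogenized sequence is still a regular sequence in $k[Z_0,\ldots,Z_n]$ — this is where the complete intersection hypothesis is genuinely used, together with the fact that a sequence of forms is regular iff the quotient ring has the expected dimension, which is preserved under this kind of homogenization. Then a homogeneous membership relation $g^h=\sum b_j g_j^h$ with $b_j$ homogeneous of degree $\deg(g)-d$ dehomogenizes (set $Z_0=1$) to the desired relation, and the degree bound on the $b_j$ transfers directly.

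The core of the argument is then: in a polynomial ring, given homogeneous $g_1,\ldots,g_s$ forming a regular sequence of forms of degree $d$, any homogeneous element $g$ of the ideal $(g_1,\ldots,g_s)$ in degree $m$ admits a representation $g=\sum b_jg_j$ with $\deg b_j = m-d$. This is automatic from the definition of the ideal once we know $g$ is homogeneous — the homogeneous components of any representation give one in the correct degree. So the real content is bounding $m$, i.e. bounding the \emph{degree in which $g$ must already lie relative to the generators} so that the Castelnuovo--Mumford regularity of the ideal (or of the quotient) controls things. Concretely: the complete intersection $(g_1^h,\ldots,g_s^h)$ has a Koszul resolution, from which one reads off that the regularity of the ideal is $s(d-1)+1$, hence the ideal agrees with its saturation in degrees $\ge s(d-1)$; combined with $g$ homogeneous of degree $\deg g$, a relation with cofactor degrees $\le \deg g - d$ always exists, and tracking the worst case over all possibilities yields the clean bound $d^s+\deg(g)$ after dehomogenization (the $d^s$ absorbing $s(d-1)+\textrm{lower order}$; in fact one uses the cruder but cleaner estimate $s(d-1)+1\le d^s$).

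The main obstacle I expect is \textbf{the homogenization step}: a naive homogenization of a regular sequence need not remain a regular sequence, so one must be careful to homogenize to a \emph{common} degree $d$ and to verify that the resulting forms still cut out something of codimension $s$ in $\mathbb{P}^n$ (equivalently that the dehomogenization map does not collapse components). This is handled by a standard dimension count: adding $Z_0$ raises the dimension of the affine variety by exactly one, and the homogeneous ideal generated by the $g_j^h$ has the same Hilbert polynomial degree as expected, forcing the regular sequence property via the Auslander--Buchsbaum formula or the unmixedness of complete intersections. Once this is in place, the Koszul/regularity bookkeeping is routine and the degree estimate follows. Since the paper cites this as an external result, I would in fact simply invoke \cite[Theorem 5.1]{dickenstein} directly; the above is the shape of the argument underlying it.
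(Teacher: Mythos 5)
The paper offers no proof of this statement: it is quoted directly from \cite{dickenstein} and closed with a square in the statement itself. So your bottom line---simply invoke the citation---is exactly what the paper does, and to that extent there is nothing to compare. What does call for comment is the sketch you present as ``the shape of the argument underlying it'', because it is not the argument of the cited paper and, as written, it would not go through.

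Two concrete problems. First, the homogenization step fails. If you pad each $g_j$ with a power of $Z_0$ so that every form $g_j^h$ acquires the common degree $d$, then as soon as two of the original polynomials have degree strictly less than $d$ the corresponding forms share the common factor $Z_0$, and two elements with a common non-unit factor never form a regular sequence; even without padding, the forms $g_1^h,\dots,g_s^h$ can acquire excess-dimensional components inside the hyperplane at infinity, so regularity of the affine sequence is genuinely not inherited, and no dimension count can rescue it in general. Second, and more fundamentally, the regularity/saturation argument yields nothing even when the homogenized sequence happens to be regular. From $g\in(g_1,\dots,g_s)$ one only gets $Z_0^N g^h\in(g_1^h,\dots,g_s^h)$ for some $N$, and bounding $N$ is the entire difficulty. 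Castelnuovo--Mumford regularity controls the difference between a homogeneous ideal $I$ and its saturation with respect to the irrelevant maximal ideal; but a codimension-$s$ complete intersection in $k[Z_0,\dots,Z_n]$ with $s\le n$ is unmixed with all associated primes of height $s$, hence already saturated in that sense, so the comparison is vacuous. The relevant object is instead the colon ideal $I:Z_0^{\infty}$, which can differ from $I$ by entire components contained in $\{Z_0=0\}$---something the regularity of $I$ does not control in the way you claim. A symptom that the route is wrong: were your argument valid it would prove the bound $s(d-1)+1+\deg(g)$, linear in $d$ and $s$, which is far stronger than the B\'ezout-type bound $d^s+\deg(g)$ actually asserted; the cited theorem is proved by quite different means (division and duality techniques for unmixed ideals), which is why the product of the degrees, rather than their sum, appears in the estimate.
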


With the same notations as above we have the following effective
differential membership:

\begin{theorem} \label{member bound}
Let $(\Sigma)$ be a quasi-regular differential system in the sense
of Remark \ref{primo} and Section \ref{seccionjacobiana},
defined by polynomials $F:=f_1,\dots, f_r\in
K\{X\}$, and let $D$ be an upper bound for the total degrees of
$f_1,\dots, f_r$. Let $f\in K\{X\}$ be an arbitrary differential
polynomial in the differential ideal $[F]$. Set $N:=\sigma +
\max\{-1, \, {\rm ord}(f)-e \}$, where $\sigma$ is the
differentiation index of $(\Sigma)$. Then, a representation
\[f\ =\ \sum_{\genfrac{}{}{0pt}{}{1\le i\le r}{0\le j\le N}} g_{ij}\, f_i^{(j)}\]
holds in the ring $A_{N+e}$ where each polynomial $g_{ij}$ has total
degree bounded by $\deg(f)+D^{r(N+1)}$.
\end{theorem}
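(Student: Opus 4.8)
The plan is to reduce the differential membership problem to an algebraic membership problem in a polynomial ring in finitely many variables, and then apply Theorem \ref{fitchas}. First I would observe that since $f\in [F]$, there is some integer $M$ such that $f$ lies in the algebraic ideal generated by $f_1^{[M-1]},\dots, f_r^{[M-1]}$ in the polynomial ring $A_{M-1+e}$; equivalently $f\in \Delta_M\cdot A_{M-1+e}$ for $M$ large enough. Localizing at the prime $\fP$, this says $f\in \Delta_M B_{i}$ for a suitable $i$, and more to the point $f\in \Delta\cap B_i$ where $i:=\max\{e-1,{\rm ord}(f)\}$, since $f$ involves only variables $X^{[i]}$ and $\Delta = [F]K\{X\}_\fP = \fP K\{X\}_\fP$. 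The key step is then to invoke Theorem \ref{sadik}: for this particular $i\ge e-1$ we have $\Delta\cap B_i = \Delta_{i-e+1+\sigma}\cap B_i$, which tells us that $f$ can be written, over the \emph{localized} ring $B_{i-e+1+\sigma-1+e} = B_{i+\sigma}$, as a combination of the $f_\ell^{(j)}$ with $0\le j\le i-e+\sigma = \sigma+\max\{-1,{\rm ord}(f)-e\} = N$. That pins down the order $N$ of derivatives needed.

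The next step is to pass from the localized statement back to a genuine representation in the polynomial ring $A_{N+e}$ (without denominators). A representation over $B_{i+\sigma}$ gives an identity $h\, f = \sum_{i,j} b_{ij} f_i^{(j)}$ in $A_{N+e}$ with $h\notin \fP$. Here I would use the quasi-regularity: by Proposition \ref{regular sequence en P}(1), the polynomials $f_1^{[N]},\dots, f_r^{[N]}$ generate a \emph{prime} ideal in $B_{N+e}$, hence (by the construction of $B_{N+e}$ as a localization of $A_{N+e}$ at $A_{N+e}\cap\fP$, together with primality of $\Delta_{N+1}B_{N+e}$) the ideal $(f_1^{[N]},\dots,f_r^{[N]})$ is prime in $A_{N+e}$ as well, and its contraction behaves well. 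More directly: $f\in \Delta_{N+1}B_{N+e}$ and $\Delta_{N+1}B_{N+e}$ is a prime ideal lying over the prime $\mathfrak q:=\Delta_{N+1}B_{N+e}\cap A_{N+e}$; since this prime ideal is generated in $A_{N+e}$ by $f_1^{[N]},\dots,f_r^{[N]}$ (again by the Jacobian criterion argument as in the proof of Proposition \ref{equival}, where $\mathfrak q$ was identified), and since $f\in B_i\cap \Delta \subset \mathfrak q A_{N+e}$ follows from $f$ being a polynomial, we actually get $f\in (f_1^{[N]},\dots, f_r^{[N]})$ already in $A_{N+e}$, with no denominator.

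Finally, with $f$ lying in the ideal generated by the $r(N+1)$ polynomials $f_1^{[N]},\dots, f_r^{[N]}$ in the polynomial ring $A_{N+e}$, I would apply Theorem \ref{fitchas}: these polynomials form a complete intersection (regular sequence, by Proposition \ref{regular sequence en P}(1) — and a regular sequence in a localization $B_{N+e}$ of the Cohen–Macaulay ring $A_{N+e}$ forces a regular sequence in $A_{N+e}$ after checking codimension, which the transcendence-degree count in Proposition \ref{equival} supplies), their total degrees are bounded by $D$ (total derivatives do not raise total degree: differentiating a monomial replaces one factor $X_j^{(q)}$ by a sum of terms each of the same total degree), and their number is $s = r(N+1)$. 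Theorem \ref{fitchas} then yields coefficients $g_{ij}$ with $\deg(g_{ij} f_i^{(j)})\le D^{r(N+1)} + \deg(f)$, hence $\deg(g_{ij})\le \deg(f) + D^{r(N+1)}$, which is exactly the claimed bound.

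The main obstacle I anticipate is the descent from the localized representation (over $B_{i+\sigma}$, with a denominator $h\notin\fP$) to a denominator-free representation over $A_{N+e}$. This requires knowing that the \emph{algebraic} ideal $(f_1^{[N]},\dots, f_r^{[N]})\subset A_{N+e}$ is already prime — not merely that it becomes prime after localizing at $\fP$ — or at least that it is $\fP$-primary / saturated enough that a polynomial in its localization lies in the ideal itself. This is precisely what quasi-regularity buys (via the Jacobian criterion and the identification of the prime $\mathfrak q$ in the proof of Proposition \ref{equival}): the zero locus of $f_1^{[N]},\dots,f_r^{[N]}$ is smooth at the generic point of $V(\mathfrak q)$, so the ideal is radical, hence prime, near $\fP$, and being generated by a regular sequence it is unmixed, which kills any embedded or lower-dimensional components that could obstruct the descent. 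Getting this argument clean and correctly cited is where the real work lies; everything downstream is a routine application of the quoted effective Nullstellensatz-type theorem.
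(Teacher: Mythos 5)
Your proposal follows essentially the same route as the paper: Theorem \ref{sadik} applied to $i:=\max\{e-1,\ {\rm ord}(f)\}$ gives the order bound $N$, and Proposition \ref{regular sequence en P} together with Theorem \ref{fitchas} gives the degree bound. The one place where you diverge is the long excursion on descending from the localized ring $B_{N+e}$ to $A_{N+e}$. Two remarks on that. First, the theorem is stated for systems that are quasi-regular \emph{in the sense of Remark \ref{primo}}, i.e. $[F]$ is assumed prime and, by that remark, all the supporting results (Theorem \ref{sadik}, Proposition \ref{regular sequence en P}) are taken to hold in the polynomial rings $A_i$ with the ideals $\Delta_i$ themselves, with no localization; so the denominator-clearing problem you anticipate is bypassed by the hypothesis rather than solved. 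Second, the argument you sketch for the descent is not correct as written: the prime $\mathfrak q$ identified in the proof of Proposition \ref{equival} is defined only by the property $\mathfrak q B_{N+e}=\Delta_{N+1}B_{N+e}$, i.e. it is the isolated component of $(f_1^{[N]},\dots,f_r^{[N]})$ through $\fP$, and in general it strictly contains that ideal in $A_{N+e}$; showing $f\in\mathfrak q$ therefore does not yield a denominator-free representation in terms of the $f_i^{(j)}$. Similarly, primality of $\Delta_{N+1}B_{N+e}$ does not imply primality of $(f_1^{[N]},\dots,f_r^{[N]})$ in $A_{N+e}$, since components away from $\fP$ are invisible after localization. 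Under the stated hypothesis none of this is needed, and the rest of your argument (degrees preserved under total derivation, $s=r(N+1)$ generators of degree at most $D$ forming a complete intersection, hence $\deg(g_{ij})\le\deg(f)+D^{r(N+1)}$ by Theorem \ref{fitchas}) matches the paper.
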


\begin{proof}
The upper bound on the order of derivation of the polynomials
$f_1,\dots, f_r$ is a direct consequence of Theorem \ref{sadik}
applied to $i:=\max\{e-1,\, \textrm{ord}(f)\}$. The degree upper
bound for the polynomials $g_{ij}$ follows from Proposition
\ref{regular sequence en P} and Theorem \ref{fitchas}.
\end{proof}

\begin{remark} From Theorems \ref{sadik} and \ref{indexbound} we deduce that
for every $i\in \N_{\ge e-1}$, the equality
$\Delta_{i+1+J(\mathcal{E}_0) - \min\{\epsilon_{ij}\}}\cap B_i =
\Delta \cap B_i$ holds, which provides a completely syntactical
upper bound for the derivation orders and degrees in the membership
problem: it suffices to take $N:=J(\mathcal{E}_0) -
\min\{\epsilon_{ij}\} +\max \{{\rm ord}(f), e-1\}$.
\end{remark}

\section{DAE and ODE systems}\label{section ode}

\subsection{The $\fP$-differentiation index and an explicit ODE system}
\label{ODEdim0}

In the zero-dimensional case ($n=r$), the estimation for the
Hilbert-Kolchin regularity of the ideal $\fP$ allows us to give a
result concerning the number of derivatives of the input equations
required to obtain an explicit ODE system from the system
(\ref{sistema ampliado}). We will show that this number is at most
the $\fP$-differentiation index of the system:

\begin{theorem} \label{despeje general dimension 0} Let $(\Sigma)$ be a DAE system
as in (\ref{sistema ampliado}) of differential dimension $0$ (or
equivalently, $r=n$), maximal order bounded by $e$ and
$\fP$-differentiation index $\sigma$. Let $\Xi =\{
\xi_1^{(\ell_1)},\ldots,\xi_s^{(\ell_s)}\}\subseteq \{ X^{[e-1]}\}$
be an algebraic transcendence basis of the fraction field of
$B_{e-1}/\Delta \cap B_{e-1}$ over the field $K$. Then:
\begin{enumerate}

\item for each $i=1,\ldots ,s$ there exists a non-zero separable polynomial
$P_i$ with coefficients in the base field $k$, such that $P_i(\Xi,
\xi_i^{(e)})\in (f_1^{[\sigma]},\ldots,  f_r^{[\sigma]})\subset
B_{e+\sigma}$;

\item  set $\{\eta_{s+1}, \ldots , \eta_{n}\}:=\{X\}\setminus \{\xi_1,
\ldots , \xi_s\}$. Then, %for each
%variable $\eta\in \{X,U\}$ such that $\eta\ne
%\xi_i$
for all $i=s+1,\ldots ,n$, there exists a non-zero separable polynomial $P_i$
with coefficients in the base field $K$, such that $P_i(\Xi, \eta_i^{(e-1)})\in
(f_1^{[\sigma-1]},\ldots, f_r^{[\sigma-1]})\subset B_{e+\sigma-1}$.
\end{enumerate}

In particular, for every $i=1, \ldots , n$  there exists a separable
non-trivial polynomial relation between $X_i^{(e)}$ and $\Xi$ modulo $\Delta$
which can be obtained using at most $\sigma$ derivations of the input
equations.
\end{theorem}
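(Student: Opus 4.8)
The plan is to exploit two facts already at our disposal: the characterization of the $\fP$-differentiation index via the stabilization of the chain $\Delta_{i-e+1+k}\cap B_i$ (Theorem \ref{sadik}), and the bound $e-1$ on the Hilbert--Kolchin regularity of $\fP$ in the zero-dimensional case (Theorem \ref{hilbert constante}, which here gives $\text{trdeg}_K \text{Frac}(A_i/(A_i\cap\fP)) = \text{ord}(\fP)$ for all $i\ge e-1$, since $n-r=0$). The guiding idea is that a transcendence basis $\Xi\subseteq\{X^{[e-1]}\}$ of $\text{Frac}(B_{e-1}/\Delta\cap B_{e-1})$ over $K$ is in fact a transcendence basis of $\text{Frac}(B_i/\Delta\cap B_i)$ over $K$ for \emph{every} $i\ge e-1$, because all these fraction fields have the same transcendence degree $\text{ord}(\fP)$ and $B_{e-1}/\Delta\cap B_{e-1}$ embeds in $B_i/\Delta\cap B_i$. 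Consequently, for any variable $Z$ among the $X^{[i]}$, its class in $\text{Frac}(B_i/\Delta\cap B_i)$ is algebraic over $K(\Xi)$, hence satisfies a non-trivial polynomial relation over $K$; by characteristic zero this relation can be taken separable (clearing denominators and extracting the separable part of the minimal polynomial). The only remaining issue is to control how many derivatives of the input equations are needed to \emph{witness} this algebraic relation modulo $\Delta$, i.e.\ to realize it in a prolongation ideal $(f_1^{[m]},\ldots,f_r^{[m]})$.

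First I would make the transcendence-basis claim precise: fix $i\ge e-1$; since $n=r$, Proposition \ref{equival}(ii) combined with $\mu_k=\mu_\sigma$ for $k\ge\sigma$ gives $\text{trdeg}_K\text{Frac}(B_i/\Delta\cap B_i)=er-\mu_\sigma=\text{ord}(\fP)$, independent of $i$; together with the inclusion $B_{e-1}/\Delta\cap B_{e-1}\hookrightarrow B_i/\Delta\cap B_i$ (obtained from $\Delta\cap B_{e-1}=(\Delta\cap B_i)\cap B_{e-1}$), this shows $\Xi$ remains a transcendence basis upstairs. Next, apply this with $i:=e$. Then $\xi_i^{(e)}$ (for $1\le i\le s$, where $\xi_i$ has order $\ell_i\le e-1$, so $\xi_i^{(e)}$ is a variable of $A_e$) is algebraic over $K(\Xi)$ modulo $\Delta$, yielding a separable $P_i$ with $P_i(\Xi,\xi_i^{(e)})\in\Delta\cap B_e$. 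By Theorem \ref{sadik} applied at $i=e$, $\Delta\cap B_e=\Delta_{e+1-e+\sigma}\cap B_e=\Delta_{\sigma+1}\cap B_e$, and $\Delta_{\sigma+1}=(f_1^{[\sigma]},\ldots,f_r^{[\sigma]})$ lives in $B_{e+\sigma}$; this gives part (1). For part (2), the $\eta_i$ with $s+1\le i\le n$ are variables in $\{X\}\subseteq\{X^{[e-1]}\}$, so I work at index $i=e-1$ instead: $\eta_i^{(e-1)}$ is a variable of $A_{e-1}$, algebraic over $K(\Xi)$ modulo $\Delta$, giving separable $P_i$ with $P_i(\Xi,\eta_i^{(e-1)})\in\Delta\cap B_{e-1}=\Delta_\sigma\cap B_{e-1}$ (Theorem \ref{sadik} at $i=e-1$), and $\Delta_\sigma=(f_1^{[\sigma-1]},\ldots,f_r^{[\sigma-1]})\subset B_{e+\sigma-1}$.

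For the separability assertion, since $K$ has characteristic zero and $\Xi$ is a transcendence basis, $K(\Xi)$ is a perfect field, so the minimal polynomial of any algebraic element over $K(\Xi)$ is automatically separable; multiplying by a common denominator of its coefficients (a polynomial in $K[\Xi]$) produces a $P_i\in K[\Xi, Y]$ that is separable as a polynomial in $Y$ and has the desired membership property. The final ``in particular'' follows by combining the two cases: for $i$ with $X_i\in\{\xi_1,\ldots,\xi_s\}$ use part (1) (the relation between $X_i^{(e)}$ and $\Xi$ needs $\sigma$ derivatives), and for $i$ with $X_i\in\{\eta_{s+1},\ldots,\eta_n\}$ use part (2) applied with the variable $\eta_i^{(e-1)}=X_i^{(e-1)}$, then differentiate the resulting relation once more if one insists on a relation involving $X_i^{(e)}$ — still within $\sigma$ derivations of the input equations since part (2) only consumed $\sigma-1$. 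The step I expect to require the most care is the bookkeeping of indices: matching the order $i$ at which Theorem \ref{sadik} is invoked with the order of the variable one wants to express ($e$ for the $\xi_i$, $e-1$ for the $\eta_i$), and checking that the resulting prolongation ideals $(f_1^{[\sigma]},\ldots)$ resp.\ $(f_1^{[\sigma-1]},\ldots)$ indeed sit inside $B_{e+\sigma}$ resp.\ $B_{e+\sigma-1}$ as claimed, so that the statement's ambient rings are correct.
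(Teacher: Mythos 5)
Your proposal is correct and follows essentially the same route as the paper: use the Hilbert--Kolchin regularity bound (Theorem \ref{hilbert constante}) to see that $\Xi$ remains a transcendence basis of ${\rm Frac}(B_e/\Delta\cap B_e)$ (resp.\ is already one of ${\rm Frac}(B_{e-1}/\Delta\cap B_{e-1})$ for the $\eta_i$), obtain the separable algebraic relations from characteristic zero, and then invoke Theorem \ref{sadik} at $i=e$ and $i=e-1$ to place them in $\Delta_{\sigma+1}\cap B_e$ and $\Delta_{\sigma}\cap B_{e-1}$ respectively. Your index bookkeeping matches the paper's, and your treatment of the separability and of the final ``in particular'' is, if anything, more explicit than the paper's own.
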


\begin{proof}  Since we are in a differential zero-dimensional situation, from the
upper bound on the regularity of the Hilbert-Kolchin function (Theorem
\ref{hilbert constante}), we have that the set $\Xi$ is also an algebraic
transcendence basis of the fraction field of the ring $B_{e}/\Delta\cap B_{e}$.
Then, for $i=1, \ldots , s$,  there exists a polynomial $P_i$ in $s+1$
variables with coefficients in $K$, such that $P_i(\Xi,\xi_{i}^{(e)})$ belongs
to the ideal $\Delta\cap B_e=\Delta_{\sigma+1}\cap B_e$ (Theorem \ref{sadik}).
Clearly, this polynomial can be chosen  separable.
% Now, the
%polynomial $P_{i_0}$ of the statement can be easily obtained from
%the previous $Q$ clearing denominator multipling  by multiplying
%by an adequate polynomial in $A_e$ not in $\fP$.

The second assertion follows similarly, but in this case we use the fact that
the family  $\{\eta_i^{(e-1)}, \Xi\}$ is algebraically dependent when regarded
in the fraction field of $B_{e-1}/\Delta\cap B_{e-1}$ over $K$, for all $i=s+1,
\ldots , n$.
\end{proof}

\subsection{Existence and Uniqueness of Solutions}\label{exun}

In the case of (explicit) ODE systems, it is well known and
classical that, under certain general hypotheses, one can ensure the
existence  and uniqueness of solutions when certain initial
conditions are fixed. However in the case of \textit{implicit} DAE
systems the same problem has only been considered recently (see for
instance \cite{pritchard}, \cite{sit}, \cite{rabier}).

This last subsection is devoted to the problem of the existence and
uniqueness of solutions for an ubiquitous class of zero-dimensional
implicit autonomous DAE systems. More precisely, throughout this
subsection we consider DAE systems
\begin{equation} \label{sistemade1}
(\mathfrak{S}):=\left\{
\begin{array}
[c]{ccl}%
f_1(X,\dot{X}) &=& 0 \\
&\vdots &\\
f_n(X,\dot{X}) &=& 0 \\
\end{array}
\right. ,
\end{equation}
where $f_1,\dots, f_n$ are polynomials in the $n$ differential
unknowns $X:=X_1,\ldots, X_n$ and  their first derivatives, with
coefficients in  $\mathbb{C}$. We also assume  that the equations
generate a prime zero-dimensional differential ideal
$\mathfrak{Q}:=[f_1, \ldots ,f_n]\subset \C\{X\}$ such that the
system is quasi-regular at $\mathfrak{Q}$ and satisfies the hypothesis
of Section \ref{seccionjacobiana}. We denote by $\sigma$ the
differentiation index of the system $(\mathfrak{S})$.

Under these assumptions (cf. Remark \ref{primo} and Proposition
\ref{regular sequence en P}), the ideal $\mathfrak{Q}_{\sigma+1}$
generated by the first $\sigma$ total derivatives of the polynomials
$f_i$ is a prime ideal defining an algebraic variety $W$ contained
in $\mathbb{C}^{n(\sigma+2)}$.

Set $V_1\subset \mathbb{C}^{2n}$ and $V_0\subset\mathbb{C}^{n}$ for
the projection of $W$ to the coordinates $(x,\dot{x})$ and $x$,
respectively. So, $V_1$ and $V_0$ are defined by the contraction of
the prolonged prime ideal $\mathfrak{Q}_{\sigma+1}$ to the
corresponding polynomial rings $\mathbb{C}[X,\dot{X}]$ and
$\mathbb{C}[X]$ respectively. Let us remark that from Theorem
\ref{sadik}, these varieties are also defined by the contractions of
the ideal $\mathfrak{Q}\subset \mathbb{C}\{X\}$ to the corresponding
rings.

Roughly speaking our existence and uniqueness theorem will state
that for any point $(x_0,p_0)$ in a suitable Zariski open dense
${U}\subseteq V_1$ the system admits a unique solution
$\varphi:(-\varepsilon,\varepsilon)\rightarrow \mathbb{C}^n$ such
that $\varphi(0)=x_0$ and $\dot{\varphi}(0)=p_0$. As customarily
(see for instance \cite[Section 6]{rabier}), the determination of
the open subset ${U}$ needs additional properties concerning the
smoothness of both the initial point $(x_0,p_0)$ and the related
projection $\pi:V_1\rightarrow V_0$, $(x,\dot{x})\mapsto x$.

In order to formalize these ideas we recall the notion of
unramifiedness (see for instance \cite[\S 6, page 100]{kunz1}).

\begin{definition} Let $S$ be an $R$-algebra, $\mathfrak{p}\subset
S$ a prime ideal and $\mathfrak{q}\subset R$ its contraction to $R$.
The ideal $\mathfrak{p}$ is \emph{unramified} if
$\mathfrak{q}S_{\mathfrak{p}}=\mathfrak{p}S_{\mathfrak{p}}$ holds
and the field extension of the residual fields
$k(\mathfrak{p})|k(\mathfrak{q})$ is separable algebraic.

If $F:V\rightarrow W$ is a morphism of complex algebraic affine
varieties and $p\in V$, we say that \emph{$F$ is unramified at $p\in
V$} if the maximal ideal associated to $p$ is an unramified prime
ideal of $\mathbb{C}[W]$ for the structure of
$\mathbb{C}[V]$-algebra induced by $F$ (observe that in this case
the separability condition is automatically fulfilled).
\end{definition}

We recall that the unramified points form an open Zariski dense
subset of the source space (see for instance \cite[Corollary
6.10]{kunz1}).

Let us reinterpret the condition of unramifiedness for our
projection $\pi: V_1\rightarrow V_0$ at a point $(x_0,p_0)\in V_1$:
if $\mathfrak{M}\subset \mathbb{C}[V_1]$ denotes the maximal ideal
associated to $(x_0,p_0)$, then the contracted ideal
$\mathfrak{M}\cap\mathbb{C}[V_0]$ agrees with the maximal ideal
$\mathfrak{N}\subset \mathbb{C}[V_0]$ associated to the point
$\pi(x_0,p_0)=x_0\in V_0$. So, the unramifiedness says that
$\mathfrak{N}$ must generate the maximal (hence regular) ideal of
the local ring $\mathbb{C}[V_1]_{\mathfrak{M}}$. Equivalently, we
have the equality ${\mathfrak{M}}= I(V_1)+\mathfrak{N}$ in the local
ring $\mathbb{C}[X,\dot{X}]_{\mathfrak{M}}$ (here $I(V_1)$ denotes
the ideal of the variety $V_1$).

Therefore, if $H$ is any system of generators of $I(V_1)$, the
polynomials $H(x_0,\dot{X})$ generate the maximal ideal of $p_0$ in
the polynomial ring $\mathbb{C}[\dot{X}]$ localized at the maximal
ideal corresponding to $p_0$. Hence, in terms of the Jacobian matrix
$DH(x_0,p_0)$, the unramifiedness of $\pi$ at the point $(x_0,p_0)$
is equivalent to the fact that the submatrix of $DH(x_0,p_0)$
corresponding to those derivatives with respect to the variables
$\dot{X}$ has full column rank $n$.

Let us remark that under this last form the unramifiedness condition
becomes very similar to the conditions induced by the hypotheses of
``\emph{non-singularity}" and ``\textit{reducible $\pi$-manifold}"
given in Rabier \& Rheinboldt's paper (see \cite[Theorem 5.2 and
Definition 6.1]{rabier}) required in order to prove their existence
and uniqueness result \cite[Theorem 6.1]{rabier}. On the other hand,
our requirement is less restrictive than the birationality condition
assumed in \cite[Theorem 25]{pritchard}.

Now we are able to show the main result of this subsection:

\begin{theorem}\label{exyun} Let $(\mathfrak{S})$ be a  DAE system
as in (\ref{sistemade1}). Let $\mathfrak{Q}=[f_1, \ldots
,f_n]\subset \C\{X\}$ be the associated differential ideal and satisfies
the hypothesis of Section \ref{seccionjacobiana}. Assume
that $\mathfrak{Q}$ is prime and $(\mathfrak{S})$ is quasi-regular.
Let $V_0\subset \mathbb{C}^n$ and $V_1\subset \mathbb{C}^{2n}$ be
the irreducible varieties defined by the ideals $\mathfrak{Q}\cap
\mathbb{C}[X]$ and $\mathfrak{Q}\cap \mathbb{C}[X,\dot{X}]$
respectively and let $\pi: V_1\rightarrow V_0$ be the projection to
the first $n$ coordinates.

Then, for every regular point $(x_0, p_0) \in V_1$ where $\pi$ is
unramified, there exist $\varepsilon
>0$, a relative open neighborhood $\mathcal{U}\subset V_1$ of $(x_0, p_0)$
and a unique analytic function $\varphi:(-\varepsilon ,
\varepsilon)\to \C^n$ such that $(\varphi ,
\dot{\varphi}):(-\varepsilon , \varepsilon)\to \mathcal{U}$,
$\varphi(0)=x_0$  and for every $i=1, \ldots ,n $, $f_i(\varphi(t),
\dot{\varphi}(t))=0$ for $t\in (-\varepsilon, \varepsilon)$.
\end{theorem}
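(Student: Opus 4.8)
The plan is to reduce the existence and uniqueness of the analytic solution $\varphi$ to an explicit ODE system via the $\fP$-differentiation index, and then invoke the classical Cauchy–Lipschitz theorem for analytic vector fields. First I would apply Theorem \ref{despeje general dimension 0} (and its supporting results, Theorems \ref{hilbert constante} and \ref{sadik}): choosing an algebraic transcendence basis $\Xi = \{\xi_1^{(\ell_1)},\dots,\xi_s^{(\ell_s)}\}$ of ${\rm Frac}(B_{e-1}/\Delta\cap B_{e-1})$ over $\C$, which in our setting ($e=1$, $n=r$) means a subset of $\{X\}$, we obtain for each coordinate $X_i$ a nontrivial separable polynomial relation between $\dot X_i$ and $\Xi$ modulo $\Delta$, obtainable after at most $\sigma$ derivations of the input equations; equivalently these relations lie in $\mathfrak{Q}\cap \C[X,\dot X]$, i.e.\ they hold on $V_1$. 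This gives a system of algebraic equations on $V_1$ that, at a point where the appropriate separability/non-degeneracy holds, can be solved for the $\dot X_i$ as analytic functions of the $\Xi$-coordinates by the analytic implicit function theorem.

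Next I would use the hypothesis that $(x_0,p_0)$ is a regular point of $V_1$ at which $\pi:V_1\to V_0$ is unramified. As explained in the discussion preceding the theorem, unramifiedness at $(x_0,p_0)$ says exactly that for a generating system $H$ of $I(V_1)$, the submatrix of $DH(x_0,p_0)$ with respect to the $\dot X$-variables has full column rank $n$; combined with regularity of $V_1$ at the point, this lets me apply the analytic implicit function theorem to write, on a relative open neighborhood $\mathcal U\subset V_1$ of $(x_0,p_0)$, the second-coordinate map as the graph of an analytic map $g:\pi(\mathcal U)\to\C^n$, i.e.\ $\dot x = g(x)$ for $(x,\dot x)\in\mathcal U$. (Here I should also check, using that $\pi(x_0,p_0)=x_0$ is a regular point of $V_0$ — which follows since $\pi$ is unramified and dominant between irreducible varieties of the same dimension $n-r$… here $0$, so $V_0$ is finite — that $\pi$ is locally biholomorphic onto its image and $g$ is well-defined and analytic.)

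Then I would set up the ODE: consider the analytic vector field $g$ on the open subset $\pi(\mathcal U)\subset V_0$ and solve the initial value problem $\dot\varphi = g(\varphi)$, $\varphi(0)=x_0$, by the Cauchy–Kovalevskaya/Picard–Lindelöf theorem for analytic right-hand sides, obtaining $\varepsilon>0$ and a unique analytic $\varphi:(-\varepsilon,\varepsilon)\to\C^n$ with $\varphi(0)=x_0$ and $\dot\varphi(0)=g(x_0)=p_0$, shrinking $\varepsilon$ so that $(\varphi,\dot\varphi)$ stays in $\mathcal U$. It remains to verify that $\varphi$ actually satisfies the original equations $f_i(\varphi,\dot\varphi)=0$. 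Since $(\varphi(t),\dot\varphi(t))\in\mathcal U\subset V_1$ for all $t$, and $V_1$ is defined by $\mathfrak Q\cap\C[X,\dot X]$ which contains $f_1,\dots,f_n$, we get $f_i(\varphi(t),\dot\varphi(t))=0$ immediately. For uniqueness: any analytic solution with values $(\varphi,\dot\varphi)$ in $\mathcal U$ must satisfy the graph relation $\dot\varphi = g(\varphi)$ (because $\mathcal U$ lies in the graph), hence solves the same analytic IVP, hence coincides with $\varphi$ by uniqueness for ODEs.

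The main obstacle I expect is the passage from "nontrivial separable polynomial relations modulo $\Delta$" to an honest analytic local parametrization $\dot x = g(x)$ with the right initial value — in other words, verifying that the unramifiedness hypothesis is precisely the condition that makes the relevant Jacobian (with respect to the $\dot X$ variables) invertible at $(x_0,p_0)$, so that the analytic implicit function theorem applies and produces $g$ with $g(x_0)=p_0$. One must be careful that the separable relations coming from Theorem \ref{despeje general dimension 0} have a simple (non-multiple) root at $p_0$ along the relevant coordinate — this is where separability is used — and that the full system of these relations, restricted to $V_1$, cuts out the graph cleanly near $(x_0,p_0)$; matching this up with the column-rank-$n$ condition on $DH(x_0,p_0)$ for a generating set $H$ of $I(V_1)$ is the technical heart of the argument. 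The remaining steps (Cauchy–Lipschitz in the analytic category, and the tautological verification that a curve lying in $V_1$ satisfies the defining equations) are standard.
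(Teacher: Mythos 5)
Your overall skeleton --- use unramifiedness plus regularity of $(x_0,p_0)$ to apply the implicit function theorem to a generating set $H$ of $I(V_1)$, obtain a local explicit description of $V_1$, solve an analytic ODE, and finish with classical uniqueness --- is the same as the paper's. But there is a genuine gap at the step you dismiss as ``tautological.'' The implicit function theorem gives you $\dot x = g(x)$ only for $x$ \emph{on} $V_0$ (locally, $V_1$ is a graph over a neighborhood of $x_0$ \emph{in the subvariety} $V_0$, not over an open subset of $\C^n$). So the initial value problem $\dot\varphi = g(\varphi)$ is an ODE on the constraint variety, and for its solution to exist and to keep $(\varphi,\dot\varphi)$ inside $\mathcal U\subset V_1$ you must prove that the vector field $g$ is \emph{tangent to $V_0$}; this cannot be arranged by merely shrinking $\varepsilon$. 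If instead you extend $g$ to an ambient neighborhood and solve there, the trajectory has no reason to stay on $V_0$, and then $(\varphi,\dot\varphi)\in V_1$ --- which is exactly what you need to conclude $f_i(\varphi,\dot\varphi)=0$ --- fails. This invariance of the constraint manifold is the technical heart of the theorem and is where the paper spends the second half of its proof: writing $V_1$ locally as $\{\textbf y = h(\textbf x),\ \dot{\textbf x}=\xi(\textbf x),\ \dot{\textbf y}=\eta(\textbf x)\}$, solving only $\dot{\textbf x}=\xi(\textbf x)$ and setting $\varphi=(\psi, h\circ\psi)$ (so the curve stays on $V_0$ by construction), and then proving the compatibility identity $\eta_j = \sum_i \frac{\partial h_j}{\partial X_i}\,\xi_i$ by combining, for each $g\in\mathfrak Q\cap\C[X]$, the vanishing of $g$ on $V_0$ with the vanishing of its total derivative $\dot g$ on $V_1$ and the full column rank of $D_{\textbf y}G$. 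Your proposal contains no substitute for this argument.

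Two smaller points. First, your parenthetical claim that $V_0$ is finite because the differential dimension $n-r$ is $0$ confuses differential dimension with algebraic dimension: $V_0$ and $V_1$ generally have positive dimension (equal, roughly, to the order of $\mathfrak Q$; see the pendulum example, where $V_0$ is a circle times a line). If $V_0$ were finite the theorem would be vacuous. Second, the opening appeal to Theorem \ref{despeje general dimension 0} and the separable relations is not needed and is not how the paper proceeds; the unramifiedness hypothesis applied directly to generators of $I(V_1)$ already supplies the Jacobian rank condition you want.
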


\begin{proof}
Let $G\subset \mathbb{C}[X]$ and $H\subset \mathbb{C}[X,\dot{X}]$ be
systems of generators of $\mathfrak{Q}\cap \mathbb{C}[X]$ and
$\mathfrak{Q}\cap \mathbb{C}[X,\dot{X}]$ respectively. Without loss
of generality, we may assume that $G\subset H$.

{}From the unramifiedness condition the submatrix $D_{\dot X} H$
consisting of last $n$ columns of the Jacobian matrix $DH$ has full
column rank $n$ at $(x_0, p_0)$. In addition, since $(x_0, p_0)$ is
a regular point of $V_1$, the rank of the matrix $DH(x_0, p_0)$
equals $2n - d$, which is the codimension of $V_1$. Then, without
loss of generality (after a permutation of the variables $X$), we
may assume that the submatrix of $DH(x_0, p_0)$ consisting of its
last $2n-d$ columns has full column rank.

Let us write $\textbf{x} := X_1,\dots, X_d$ and $\textbf{y}:=
X_{d+1}, \ldots , X_n$ for the variables $X$, and
$x_0:=(\textbf{x}_0, \textbf{y}_0)\in \C^d\times \C^{n-d}$. By the
Implicit Function Theorem (IFT),  there exist open neighborhoods
$Z_0\subset \C^d$ of
 ${\textbf x}_0$ and $Z_1\subset \C^{2n-d}$ of
 $({\textbf y}_0, p_0)$
 and holomorphic functions $(h, \xi, \eta): Z_0\to Z_1$ such that the open subset of $V_1$ given by
  $\{H=0\}\cap (Z_0\times Z_1)$ can be described by the new, explicit system
  \begin{equation}\label{sistemadespe}\left\{\begin{array}{ccc}\textbf y &=&h(\textbf x )\\ \dot{\textbf x
}&=&\xi(\textbf x )\\ \dot{\textbf y }&=&\eta(\textbf x
)\end{array}\right..\end{equation}

Note that if $H = \{G, \widetilde G\}$, the Jacobian matrix of $H$
at $(x_0, p_0)$ takes the form
$$DH(x_0, p_0) = \left(\begin{array}{c|c}
\begin{array}{cc}
D_\textbf{x}G (x_0) & D_\textbf{y}G(x_0)
\end{array} & \quad  0 \quad \\
\hline\\[-3mm] \begin{array}{cc}
D_\textbf{x}\widetilde G(x_0, p_0) & D_\textbf{y}\widetilde G(x_0,
p_0)
\end{array} & \quad D_{\dot X} \widetilde G (x_0, p_0) \quad
\end{array}\right).$$
Then, we have that $\text{rank}(D_\textbf{y}G(x_0)) = n-d$ and so,
by the IFT applied to the system $G=0$ at the point $x_0$, the
system $\textbf{y} =h(\textbf x)$ describes a neighborhood of $x_0$
in $V_0$.

Let us consider now the system  $\dot{\textbf x }=\xi(\textbf x )$
as an autonomous ODE. By the classical Existence Theorem (see for
instance \cite[Theorem 1, \S 3, Chapter 8]{hirsch}), there exist
$\varepsilon
>0$ and a unique analytic function $\psi:=(\psi_1, \ldots ,
\psi_d): (-\varepsilon,\varepsilon)\rightarrow \mathbb{C}^d$, such
that $\psi(0)=\textbf x _0$  and, for $j=1, \ldots d$,
$\frac{d}{dt}\psi_j=\xi_j\circ\psi$. Thus, the  function
$\varphi:(-\varepsilon,\varepsilon)\rightarrow \mathbb{C}^n$ defined
by $\varphi(t)=(\psi(t), h(\psi(t))$ is a solution of the mixed
differential system $\left\{\begin{array}{ccc}\textbf y &=&h(\textbf
x )\\ \dot{\textbf
x }&=&\xi(\textbf x )\end{array}\right.$. %In particular, every
%polynomial  $g\in \mathfrak{Q} \cap A_0$ satisfies  that
%$g\circ\varphi =0$ since an open set of the variety $V_0$ is defined
%by the relations $\textbf y =h(\textbf x)$.

It remains to verify that  $\varphi$ is a solution of the original
system $(\mathfrak{S})$. Since $(f_1, \ldots , f_n)\subset
\mathfrak{Q}\cap A_1 =(H)$,  it is enough to check that $\varphi$
verifies  the system (\ref{sistemadespe}) since it defines an open
subset of $V_1=\{H=0\}$ containing the point $(x_0, p_0)$. By
construction, one only needs to check that
$\frac{d}{dt}\,\varphi_{d+j} =\eta_j \circ \varphi$, for all
$j=1,\ldots , n-d$, or, more explicitly, that $\frac{d}{dt}\,
(h_j\circ \psi) =\eta_j \circ \psi$.

For every $g\in \mathfrak{Q}\cap A_0$,  we  know that
$g(\textbf{x},h(\textbf{x}))=0$ in a neighborhood of $\textbf{x}_0$.
Taking the partial derivative with respect to the variable $X_i$,
for $i=1,\ldots ,d$, we have that
%\[
$ \frac{\partial g}{\partial X_i}(\textbf{x},h(\textbf{x}))+
\sum_{j=1}^{n-d} \frac{\partial g}{\partial
X_{d+j}}(\textbf{x},h(\textbf{x}))\ \frac{\partial h_j}{\partial
X_i}(\textbf{x}) =0 $
%\]
for $\textbf{x}$ in a neighborhood of $\textbf{x}_0$. Multiplying
this identity by $\xi_i(\textbf{x})$ and adding over all the indices
$i$, we obtain:
\begin{equation} \label{11}
\sum_{i=1}^d\frac{\partial g}{\partial
X_i}(\textbf{x},h(\textbf{x}))\ \xi_i(\textbf{x}) + \sum_{j=1}^{n-d}
\frac{\partial g}{\partial X_{d+j}}(\textbf{x},h(\textbf{x})) \ \sum
_{i=1}^d \frac{\partial h_j}{\partial X_i}(\textbf{x}) \
\xi_i(\textbf{x})=0 .
\end{equation}
On the other hand, since the total derivative of  $g$ belongs to the
ideal $\mathfrak{Q}\cap A_1$, we have that
$\dot{g}(\textbf{x},h(\textbf{x}),\xi(\textbf{x}),\eta(\textbf{x}))=0$
for $\textbf{x}$ in a neighborhood of  $\textbf{x}_0$, that is,
\begin{equation} \label{22}
\sum_{i=1}^{d} \frac{\partial g}{\partial X_i}
(\textbf{x},h(\textbf{x})) \ \xi_i(\textbf{x}) + \sum_{j=1}^{n-d}
\frac{\partial g}{\partial X_{d+j}} (\textbf{x},h(\textbf{x}))\
\eta_j(\textbf{x}) =0.
\end{equation}
{}From identities (\ref{11}) and (\ref{22}) we deduce that, in a
neighborhood of $\textbf{x}_0$,
\[
\sum_{j=1}^{n-d} \frac{\partial g}{\partial
X_{d+j}}(\textbf{x},h(\textbf{x})) \Big(\eta_j(\textbf{x})- \sum
_{i=1}^d \ \frac{\partial h_j}{\partial X_i}(\textbf{x}) \
\xi_i(\textbf{x})\Big) =0.
\]

In particular, by using this identity for all  $g\in G$, we conclude
that, for all $\textbf{x}$ in a neighborhood of $\textbf{x}_0$, the
vector $w(\textbf{x})\in\mathbb{C}^{n-d}$, whose $j$-th coordinate
is
%\[
$\eta_j(\textbf{x})- \sum _{i=1}^d  \ \frac{\partial h_j}{\partial
X_i}(\textbf{x}) \ \xi_i(\textbf{x})$
%\]
for $j=1,\ldots,n-d$, belongs to the kernel of the submatrix
$D_\textbf{y} G(\textbf{x},h(\textbf{x}))$. Since this matrix has
full column rank $n-d$, it follows that $w(\textbf{x})=0$ and thus
\[ \hspace{4.5cm}
\eta_j(\textbf{x})\ =\ \sum _{i=1}^d  \ \frac{\partial h_j}{\partial
X_i}(\textbf{x}) \ \xi_i(\textbf{x}) \hspace{2cm} j=1,\dots, n-d.
\]
Setting  $\textbf{x}=\psi(t)$ we conclude that
\[\eta_j \circ
\psi = \sum _{i=1}^d  \ \frac{\partial h_j}{\partial X_i}(\psi(t)) \
\xi_i(\psi(t)) = \sum _{i=1}^d  \ \frac{\partial h_j}{\partial
X_i}(\psi(t)) \ \dot{\psi_i}(t)= \frac{d}{dt}(h_j\circ \psi).
\]

The uniqueness of the solution is a straightforward consequence of
the classical theorem for ODE's.
\end{proof}

\bigskip

We remark that the previous existence and uniqueness result can be easily
extended to non autonomous systems in, at least, two well known ways.

First, suppose that the base field
$K$ in the square system (\ref{sistemade1}) is the field of rational complex
functions $\mathbb{C}(t)$ instead of $\mathbb{C}$. Let $r(t)\in \mathbb{C}(t)$
be the maximum common divisor
of the denominators appearing in (\ref{sistemade1}) and consider the
new system

\begin{equation} \label{sistemade2}
(\mathfrak{S}_1):=\left\{
\begin{array}
[c]{ccl}%
r(t)f_1(X,\dot{X}) &=& 0 \\
&\vdots &\\
r(t)f_n(X,\dot{X}) &=& 0 \\
\end{array}
\right. ,
\end{equation}

\noindent which involves polynomials in $\mathbb{C}[t,X,\dot{X}]$.

This system can be interpreted as an autonomous one by simply considering $t$ as a new
unknown variable and adding the equation
\[
\dot{t}=1.
\]

It is easy to see that this new system is quasi-regular, the involved equations generate a
differential ideal in $\mathbb{C}\{t,X\}$ which is prime after inverting the polynomial
$r(t)$ (in fact the corresponding localization of the factor ring can be naturally included in
the factor ring of the original system which is an integral domain by hypothesis). Also the
system verifies the hypothesis of Subsection \ref{seccionjacobiana} as the original one.

Moreover, the localization in $r(t)$ by the Rabinowitz's Trick can be replaced by introducing
a new variable $S$ and the (polynomial) equation
\[
S\, r(t)-1=0.
\]

Now, we can applied the previous result to this $(n+2)\times(n+2)$-autonomous system and
extend Theorem \ref{exyun} for the non-autonomous case.\\

On the other hand, as it is observed in \cite[Section 2.2]{sit}, we may also extend our results
to non-autonomous systems involving transcendental functions which may be defined as solutions of
single DAE-equations (for instance, trigonometric or exponential functions):
one replaces the transcendental function by a new variable and adds the autonomous
differential equation which defines that function. Since each added equation involves
a new variable the assumptions about the original system will be fulfilled also by the new one.

\bigskip

\noindent \textbf{Acknowledgements.} The authors are grateful to
Fran\c cois Ollivier (\'Ecole Polytechnique, Palaiseau) for helpful
remarks on Jacobi's work and for providing them his translations of
the papers \cite{jacobi}, \cite{jacobi2}, and \cite{kondra}.
Gabriela Jeronimo and Gustavo Massaccesi also thank Flavia Bonomo
for bibliographic information on the K\"onig-Egerv\'ary theorem.
\par Part of this work was carried out while
Gabriela Jeronimo was visiting the Institute for Mathematics and its
Applications (IMA), Minneapolis MN, USA. She thanks the IMA for its
hospitality.

\end{document}